\newtheorem{theorem}{Theorem}[section]
\newtheorem{lemma}[theorem]{Lemma}
\newtheorem{corollary}[theorem]{Corollary}
\newtheorem{proposition}[theorem]{Proposition}
\newtheorem{remark}[theorem]{Remark}
\newcounter{theor}
\newtheorem{thm}[theor]{Theorem}
\def\lin{\mathop\mathrm{lin}\nolimits}
\def\conv{\mathop\mathrm{conv}\nolimits}
\def\hyp{\mathop\mathrm{hyp}\nolimits}
\def\K{\mathcal{K}}
\def\L{\mathcal{L}}
\def\R{\mathbb{R}}
\def\N{\mathbb{N}}
\def\Z{\mathbb{Z}}
\def\vol{\mathrm{vol}}
\def\e{\mathrm{e}}
\newcommand{\dlat}{\mathrm{d}}
\def\e{\mathrm{e}}
\def\esc#1{\left\langle #1\right\rangle}
\def\Cube#1{\mathrm{C}_{#1}}
\def\d{\mathcal{D}}
\def\floor#1{\lfloor#1\rfloor}
\def\2Z{2^{-m}\Z^n}
\def\Gsub#1{\mathrm{G}_{#1}}
\def\G{\mathrm{G}_n}
\def\symbol{\diamond}
\DeclareMathOperator*{\esssup}{ess\,sup}
\numberwithin{equation}{section}
\begin{document}

\title[Rogers-Shephard inequalities for the lattice point enumerator]
{On Rogers-Shephard type inequalities for the lattice point enumerator}

\author{David Alonso-Guti\'errez}
\address{\'Area de an\'alisis matem\'atico, Departamento de matem\'aticas, Facultad de Ciencias, Universidad de Zaragoza, Pedro cerbuna 12, 50009 Zaragoza (Spain), IUMA}
\email{alonsod@unizar.es}
\author{Eduardo Lucas}
\author{Jes\'us Yepes Nicol\'as}
\address{Departamento de Matem\'aticas, Universidad de Murcia, Campus de
Espinar\-do, 30100-Murcia, Spain}
\email{eduardo.lucas@um.es}
\email{jesus.yepes@um.es}

\thanks{First author is supported by MICINN project PID-105979-GB-I00 and DGA project E48\_20R.
Second and third authors are supported by MICINN/FEDER project PGC2018-097046-B-I00
and by ``Programa de Ayudas a Grupos de Excelencia de la Regi\'on de Murcia'', Fundaci\'on S\'eneca,
19901/GERM/15.}

\subjclass[2010]{Primary 52C07, 26D15; Secondary 52A40}

\keywords{Rogers-Shephard inequality, lattice point enumerator, Berwald's inequality, projection-section inequality}

\begin{abstract}
In this paper we study various Rogers-Shephard type inequalities for
the lattice point enumerator $\Gsub{n}(\cdot)$ on $\R^n$.
In particular,
for any non-empty convex bounded sets $K,L\subset\R^n$,
we show that
\[
\Gsub{n}(K+L)\Gsub{n}\bigl(K\cap(-L)\bigr)
\leq\binom{2n}{n} \Gsub{n}\bigl(K+(-1,1)^n\bigr)\Gsub{n}\bigl(L+(-1,1)^n\bigr).
\]
and
\[
\Gsub{n-k}(P_{H^\perp} K)\Gsub{k}(K\cap H)\leq\binom{n}{k}\Gsub{n}\bigl(K+(-1,1)^n\bigr),
\]
for $H=\lin\{\e_1,\dots,\e_k\}$, $k\in\{1,\dots,n-1\}$.

\smallskip

Additionally, a discrete counterpart to a classical result by Berwald for concave functions, from which other discrete Rogers-Shephard type inequalities may be derived, is shown.
Furthermore, we prove that these new discrete analogues for $\Gsub{n}(\cdot)$ imply the corresponding results involving the Lebesgue measure.
\end{abstract}

\maketitle

\section{Introduction and main results}

Let $\K^n$ be the set of all convex bodies of the Euclidean space
$\R^n$, i.e., the family of all non-empty compact convex sets in $\R^n$.
The $n$-dimensional volume of a measurable set $M\subset\R^n$, i.e., its $n$-dimensional
Lebesgue measure, is denoted by $\vol(M)$ or $\vol_n(M)$ if the distinction of the dimension
is useful (when integrating, as usual, $\dlat x$ will stand for $\dlat \vol(x)$).
The \emph{Minkowski sum} of two non-empty sets $A,B\subset\R^n$ is the
classical vector addition of them, $A+B=\{a+b:\, a\in A, \, b\in B\}$, and
we write $A-B$ for $A+(-B)$.
We denote by $A\sim B =\{x\in\R^n:\, x+B\subset A\}$ the so-called \emph{Minkowski difference} of $A$ and $B$
(for more on this notion and its connection with the Minkowski sum, we refer the reader to \cite[Section~3.1]{Sch2}).
Moreover, $\lambda A$ represents the set
$\{\lambda a:\, a\in A\}$ for $\lambda\geq0$, and $\dim A$ denotes its dimension, i.e., the dimension of its affine hull.

A fundamental relation involving the volume and the Minkowski addition is
the \emph{Brunn-Minkowski inequality}. One form of it states that if
$K,L\in\K^n$ and $\lambda\in(0,1)$ then
\begin{equation}\label{e:BM}
\vol\bigl((1-\lambda) K+\lambda L\bigr)^{1/n}\geq
(1-\lambda)\vol(K)^{1/n}+\lambda\vol(L)^{1/n}.
\end{equation}
The Brunn-Minkowski inequality has become not only a cornerstone of the
Brunn-Minkowski theory (for which we refer the reader to the updated
monograph \cite{Sch2}) but also a powerful tool in other related fields of
mathematics.
Moreover, it quickly yields other well-known inequalities,
such as the \emph{isoperimetric inequality}, it
has inspired new engaging related results and it has been the starting
point for new extensions and generalizations (see~e.g.~\cite[Chapter~9]{Sch2}).
For extensive survey articles on this and other
related inequalities we refer the reader to \cite{Brt,G}.

In the particular case when $L=-K$ and $\lambda=1/2$, \eqref{e:BM} gives
\[\vol(K-K)\geq 2^n\vol(K).\]
An upper bound for the volume of the difference body $K-K$ is given by the
\emph{Rogers-Shephard inequality}, originally proven in \cite{RS1}. For more details
about this inequality, we also refer the reader to
\cite[Section~10.1]{Sch2}.

\begin{thm}[The Rogers-Shephard inequality]\label{t:RS_vol}
Let $K\in\K^n$. Then
\begin{equation}\label{e:RS_K-K_vol}
\vol(K-K)\leq \binom{2n}{n}\vol(K).
\end{equation}
\end{thm}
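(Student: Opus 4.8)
The plan is to follow the classical covariogram approach, reducing the inequality to a lower bound for the integral of a $1/n$-concave function in terms of its maximum and the volume of its support. We may assume $K$ is full-dimensional, since otherwise both sides vanish. First I would introduce the overlap function
\[
g(x)=\vol\bigl(K\cap(K+x)\bigr),\qquad x\in\R^n,
\]
and record its three basic features: its support is exactly the difference body $K-K$, because $K\cap(K+x)\neq\emptyset$ precisely when $x\in K-K$; it attains its maximum $\vol(K)$ at the origin, since $g(x)\le\vol(K)=g(0)$; and, by Fubini's theorem together with the translation invariance of Lebesgue measure,
\[
\int_{K-K}g(x)\,\dlat x=\int_{\R^n}\int_{\R^n}\mathbf 1_K(y)\,\mathbf 1_K(y-x)\,\dlat y\,\dlat x=\vol(K)^2.
\]

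Next I would show that $g^{1/n}$ is concave on $K-K$. For $x_0,x_1\in K-K$ and $\lambda\in(0,1)$ the convexity of $K$ yields the inclusion $(1-\lambda)\bigl(K\cap(K+x_0)\bigr)+\lambda\bigl(K\cap(K+x_1)\bigr)\subseteq K\cap\bigl(K+(1-\lambda)x_0+\lambda x_1\bigr)$, and applying the Brunn-Minkowski inequality \eqref{e:BM} to the two sets on the left gives $g\bigl((1-\lambda)x_0+\lambda x_1\bigr)^{1/n}\ge(1-\lambda)g(x_0)^{1/n}+\lambda g(x_1)^{1/n}$, as required.

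With these ingredients the heart of the argument is a pointwise radial estimate. Writing $\rho(u)$ for the radial function of $C:=K-K$ in direction $u\in\s^{n-1}$, and using that $s\mapsto g(su)^{1/n}$ is concave on $[0,\rho(u)]$ with value $\vol(K)^{1/n}$ at $s=0$ and a nonnegative value at $s=\rho(u)$, comparison with the affine function through the endpoints gives
\[
g(su)\ge\vol(K)\Bigl(1-\tfrac{s}{\rho(u)}\Bigr)^n,\qquad 0\le s\le\rho(u).
\]
Integrating this estimate in polar coordinates, the inner radial integral reduces after the substitution $s=\rho(u)v$ to the Beta integral $\int_0^1v^{n-1}(1-v)^n\,\dlat v=\frac{(n-1)!\,n!}{(2n)!}$, and using $\vol(C)=\tfrac1n\int_{\s^{n-1}}\rho(u)^n\,\dlat u$ I obtain
\[
\vol(K)^2=\int_C g(x)\,\dlat x\ge\vol(K)\,\frac{n!\,n!}{(2n)!}\,\vol(C)=\frac{\vol(K)}{\binom{2n}{n}}\,\vol(K-K).
\]
Dividing by $\vol(K)$ yields \eqref{e:RS_K-K_vol}. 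The main obstacle I anticipate is making the radial comparison and the polar-coordinate bookkeeping fully rigorous---in particular justifying concavity of $g^{1/n}$ up to the boundary of its support and handling directions where $\rho(u)=0$---but none of these is serious, since $g$ is continuous and $C$ is a convex body containing the origin in its interior.
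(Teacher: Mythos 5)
Your proof is correct, and it is essentially the argument the paper itself points to: the paper derives \eqref{e:RS_K-K_vol} from Berwald's inequality (Theorem~\ref{t:Berwald_classical} with $p=n$, $q\to\infty$) applied to the covariogram $f(x)=\vol\bigl(K\cap(x+K)\bigr)^{1/n}$, using exactly the three facts you establish (support $K-K$, maximum $\vol(K)$ at the origin, integral of $f^n$ equal to $\vol(K)^2$, and $1/n$-concavity via Brunn--Minkowski). Your radial cone comparison and Beta-integral computation are precisely the content of the $q\to\infty$ case of Berwald specialized to this function, so the two routes coincide.
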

This relation for $K-K$ can be generalized to the Minkowski addition of two convex bodies
$K,L\in\K^n$ as follows:
\begin{equation}\label{e:RS_K_L_vol}
\vol(K+L)\vol\bigl(K\cap(-L)\bigr)\leq\binom{2n}{n}\vol(K)\vol(L).
\end{equation}

The Rogers-Shephard inequality was recently
extended to the functional setting \cite{AAGJV,AGMJV,AEFO,Co},
generalized to different types of measures \cite{AHCYNRZ,Ro},
as well as studied in the $L_p$ setting \cite{A,BC}.
Moreover, it was recently extended to other geometric functionals \cite{AHCYN},
and a reverse form of Rogers-Shephard inequality
in the setting of log-concave functions was given in \cite{A}.
The role of this inequality in characterization results of the difference body was also
studied in \cite{ASG}, and it was
proven an optimal stability version of it in \cite{Bo}.
It is also interesting to note that a strengthening of this inequality
for mixed volumes was conjectured (independently by Godbersen and Makai~Jr.,
see~\cite[Note~5 for Section~10.1]{Sch2} and the references therein);
a conjecture on which engaging progress was recently made in \cite{AEFO}.

\smallskip

In \cite{GG} Gardner and Gronchi obtained a powerful discrete analogue of
the following form of the Brunn-Minkowski inequality, in the setting of
$\Z^n$ with the cardinality $|\cdot|$: $\vol(K+L)\geq\vol(B_K+B_L)$, where
$B_K$ and $B_L$ denote centered Euclidean balls of the same volume as $K$
and $L$, respectively. Moreover, from the mentioned version, they derive
some inequalities that improve previous results by Ruzsa, collected in
\cite{Ru1,Ru2}.

More recently, different discrete analogues of the Brunn-Minkowski
inequality have been obtained, including the case of its classical form
(cf.~\eqref{e:BM}) for the cardinality \cite{GT,HCIYN,IYNZ}, functional
extensions of it \cite{HKS,IYN,IYNZ,KL,Sl} and versions for the \emph{lattice
point enumerator} $\Gsub{n}(\cdot)$ \cite{HKS,HCLYN,IYNZ}, which is given by
$\Gsub{n}(M)=|M\cap\Z^n|$. In this respect, \cite[Theorem~2.1]{IYNZ} reads as
follows:
\begin{thm}\label{t: BM_lattice_point_no_G(K)G(L)>0}
Let $\lambda\in(0,1)$ and let $K,L\subset\R^n$ be non-empty bounded sets.
Then
\begin{equation}\label{e: BM_lattice_point_no_G(K)G(L)>0}
\G\bigl((1-\lambda)K+\lambda L+(-1,1)^n\bigr)^{1/n}\geq(1-\lambda)\G(K)^{1/n}+\lambda\G(L)^{1/n}.
\end{equation}
\end{thm}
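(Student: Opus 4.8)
The plan is to reduce the statement to the classical Brunn--Minkowski inequality \eqref{e:BM} through a ``cube body'' encoding of the lattice-point enumerator. To every bounded set $M\subseteq\R^n$ I associate $\widetilde{M}:=(M\cap\Z^n)+[0,1)^n$, the disjoint union of the half-open unit cubes $z+[0,1)^n$ with $z\in M\cap\Z^n$, so that $\vol(\widetilde{M})=\G(M)$. The argument then hinges on two elementary facts about unit cubes. The first is that, since $[0,1)^n$ is convex, $(1-\lambda)[0,1)^n+\lambda[0,1)^n=[0,1)^n$ for every $\lambda\in(0,1)$, so the cubes of $\widetilde K$ and $\widetilde L$ recombine without enlarging under the convex combination. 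The second, which is the crux, is the inequality
\[ \vol\bigl(B+[0,1)^n\bigr)\le\G\bigl(B+(-1,1)^n\bigr)\qquad\text{for every bounded }B\subseteq\R^n. \]

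Granting these, set $B:=(1-\lambda)K+\lambda L$. Using $(M\cap\Z^n)\subseteq M$ together with the convex-cube identity, I obtain the exact description
\[ (1-\lambda)\widetilde K+\lambda\widetilde L=\bigl[(1-\lambda)(K\cap\Z^n)+\lambda(L\cap\Z^n)\bigr]+[0,1)^n\subseteq B+[0,1)^n, \]
which in particular exhibits $(1-\lambda)\widetilde K+\lambda\widetilde L$ as a finite union of unit cubes, hence measurable. Applying the Brunn--Minkowski inequality (in the general form valid for bounded measurable sets; cf.\ \eqref{e:BM}) to $\widetilde K$ and $\widetilde L$ gives
\[ \vol\bigl((1-\lambda)\widetilde K+\lambda\widetilde L\bigr)^{1/n}\ge(1-\lambda)\vol(\widetilde K)^{1/n}+\lambda\vol(\widetilde L)^{1/n}=(1-\lambda)\G(K)^{1/n}+\lambda\G(L)^{1/n}, \]
while monotonicity of volume, the displayed inclusion and the second cube inequality yield
\[ \vol\bigl((1-\lambda)\widetilde K+\lambda\widetilde L\bigr)\le\vol\bigl(B+[0,1)^n\bigr)\le\G\bigl(B+(-1,1)^n\bigr)=\G\bigl((1-\lambda)K+\lambda L+(-1,1)^n\bigr). \]
Chaining the last two displays is exactly \eqref{e: BM_lattice_point_no_G(K)G(L)>0}.

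It remains to justify the second cube inequality, and this is where the open cube of edge $2$ in the statement is forced. Writing $\vol(B+[0,1)^n)=\sum_{z\in\Z^n}\vol\bigl((B+[0,1)^n)\cap(z+[0,1)^n)\bigr)$ using the tiling of $\R^n$ by the cubes $z+[0,1)^n$, each summand is at most $1$, and a summand can be nonzero only when $(z+[0,1)^n)\cap(B+[0,1)^n)\ne\emptyset$, i.e.\ when $z\in B+[0,1)^n-[0,1)^n=B+(-1,1)^n$; as such $z$ are lattice points, their number is at most $\G(B+(-1,1)^n)$. The identity $[0,1)^n-[0,1)^n=(-1,1)^n$ is precisely the reason the term $(-1,1)^n$ appears. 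I expect the genuine obstacle to lie not in this computation but in the degenerate cases $\G(K)\G(L)=0$, where $\widetilde K$ or $\widetilde L$ is empty and Brunn--Minkowski cannot be invoked as above; these require a direct treatment. For instance, if $\G(K)=0$ one fixes $x_0\in K$ and observes that $(1-\lambda)x_0+\lambda\widetilde L\subseteq B+[0,1)^n$, so the second cube inequality already gives $\lambda^n\G(L)\le\G\bigl(B+(-1,1)^n\bigr)$, which is the desired bound in this case.
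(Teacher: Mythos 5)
Your proof is correct, but there is nothing in the paper to compare it with: the statement in question (inequality \eqref{e: BM_lattice_point_no_G(K)G(L)>0}) is not proved in this paper at all; it is imported verbatim from \cite{IYNZ} (Theorem~2.1 there) and used throughout as a black box. Your argument is a valid, self-contained derivation. The three ingredients all check out: the convexity identity $(1-\lambda)[0,1)^n+\lambda[0,1)^n=[0,1)^n$ gives the exact description of $(1-\lambda)\widetilde K+\lambda\widetilde L$ as a finite (hence measurable) union of translated cubes, which legitimises the appeal to the Brunn--Minkowski inequality for bounded measurable sets; the counting bound $\vol\bigl(B+[0,1)^n\bigr)\le\Gsub{n}\bigl(B+(-1,1)^n\bigr)$ follows correctly from the tiling of $\R^n$ by the cubes $z+[0,1)^n$ together with $[0,1)^n-[0,1)^n=(-1,1)^n$; and the degenerate cases $\Gsub{n}(K)\Gsub{n}(L)=0$, which Brunn--Minkowski cannot reach, are treated separately and correctly (the case where both vanish being trivial since the right-hand side is then zero). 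It is worth noting that your second cube inequality is a close relative of \eqref{e:relation_vol<G}, which the authors do prove in Section~\ref{s:RS} and use for exactly the same purpose, namely to pass from a volume back to a lattice-point count at the price of fattening by a cube; your proof therefore fits naturally into the circle of ideas the paper builds on, and could serve as an appendix-style proof keeping the manuscript self-contained.
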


Apart from the above-mentioned discrete analogues of the Brunn-Minkowski inequality,
various discrete counterparts, for the lattice point enumerator $\Gsub{n}(\cdot)$, of results in Convex Geometry were recently proven. Some examples of such results are \emph{Koldobsky's slicing inequality} \cite{AHZ},
\emph{Meyer's inequality} \cite{FH} and an isoperimetric type inequality \cite{ILYN}. We refer the reader to these articles and the references therein for other connected problems, questions and results.

At this point, and taking into account the strong connection between the Brunn-Minkowski inequality and the Rogers-Shephard inequality \eqref{e:RS_K-K_vol}, it is natural to wonder about the possibility of obtaining a discrete version of the latter, when dealing with the lattice point enumerator.
To this aim, a very elegant discrete analogue of the Rogers-Shephard inequality
\eqref{e:RS_K-K_vol} in the planar case (in fact, a stronger version of it) was shown in \cite{GG}, as a consequence of \emph{Pick's theorem} jointly with \eqref{e:RS_K-K_vol}:
\begin{thm}\label{t:GG}
Let $P\subset\R^2$ be a convex polygon with integer vertices. Then
\begin{equation}\label{e:GG}
\Gsub{2}(P-P)\leq 6\Gsub{2}(P)-\mathrm{b}(P)-5,
\end{equation}
where $\mathrm{b}(P)$ denotes the number of integer points in the boundary of $P$.
\end{thm}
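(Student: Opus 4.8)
The plan is to turn the continuous Rogers--Shephard inequality \eqref{e:RS_K-K_vol} into a statement about lattice points by means of \emph{Pick's theorem}. For a $2$-dimensional lattice polygon $Q$ this relates the area $\vol(Q)$, the number of interior lattice points $\mathrm{i}(Q)$, and the number of boundary lattice points $\mathrm{b}(Q)$ via $\vol(Q)=\mathrm{i}(Q)+\mathrm{b}(Q)/2-1$. Since $\Gsub{2}(Q)=\mathrm{i}(Q)+\mathrm{b}(Q)$ counts all lattice points of $Q$, eliminating $\mathrm{i}(Q)$ produces the single identity
\[
\Gsub{2}(Q)=\vol(Q)+\frac{\mathrm{b}(Q)}{2}+1,
\]
valid for every $2$-dimensional lattice polygon $Q$ (I would assume $P$ full-dimensional; the degenerate cases are checked directly). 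The idea is to apply this twice, to $Q=P$ and to $Q=P-P$, feeding in the area bound furnished by Rogers--Shephard.

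First I would record the two instances. Applied to $P$ the identity gives $\vol(P)=\Gsub{2}(P)-\mathrm{b}(P)/2-1$. Since $P-P=P+(-P)$ has integer vertices, it is again a lattice polygon, and the identity applied to it gives $\Gsub{2}(P-P)=\vol(P-P)+\mathrm{b}(P-P)/2+1$. The planar case ($n=2$) of \eqref{e:RS_K-K_vol} reads $\vol(P-P)\le 6\,\vol(P)$, as $\binom{4}{2}=6$. Substituting the first identity rewrites $6\,\vol(P)$ as $6\Gsub{2}(P)-3\mathrm{b}(P)-6$, whence
\[
\Gsub{2}(P-P)\le 6\Gsub{2}(P)-3\mathrm{b}(P)-6+\frac{\mathrm{b}(P-P)}{2}+1.
\]
It then remains to control the number of boundary lattice points of the difference body in terms of that of $P$.

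The main obstacle, and the only genuinely discrete ingredient, is therefore this last estimate on $\mathrm{b}(P-P)$. The key observation I would exploit is that the total number of boundary lattice points of a lattice polygon equals the sum of the lattice lengths $\gcd(|a|,|b|)$ of its edge vectors $(a,b)$, and that this quantity is \emph{additive} under Minkowski addition: the edges of $P+(-P)$ are obtained by grouping the edges of $P$ and of $-P$ according to their primitive direction and adding the collinear, equally oriented edge vectors within each group, an operation under which lattice length is additive. As reflection preserves lattice lengths, this yields $\mathrm{b}(P-P)=\mathrm{b}(P)+\mathrm{b}(-P)=2\,\mathrm{b}(P)$; in particular $\mathrm{b}(P-P)\le 4\,\mathrm{b}(P)$. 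Inserting this into the previous display gives
\[
\Gsub{2}(P-P)\le 6\Gsub{2}(P)-3\mathrm{b}(P)-6+2\mathrm{b}(P)+1=6\Gsub{2}(P)-\mathrm{b}(P)-5,
\]
which is \eqref{e:GG}. I would further remark that using the exact identity $\mathrm{b}(P-P)=2\,\mathrm{b}(P)$ in place of the crude bound $\mathrm{b}(P-P)\le 4\,\mathrm{b}(P)$ even sharpens the right-hand side to $6\Gsub{2}(P)-2\mathrm{b}(P)-5$, with equality when $P$ is a lattice triangle, so the genuinely hard work is confined to the additivity of the lattice-boundary count.
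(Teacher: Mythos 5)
Your argument is correct and is exactly the route the paper indicates for this result, which it does not prove itself but quotes from \cite{GG} ``as a consequence of Pick's theorem jointly with \eqref{e:RS_K-K_vol}'': Pick's identity applied to $P$ and to the lattice polygon $P-P$, the planar Rogers--Shephard bound $\vol(P-P)\le 6\vol(P)$, and the additivity of lattice edge lengths under Minkowski addition giving $\mathrm{b}(P-P)=2\,\mathrm{b}(P)$. Your computation in fact yields the slightly stronger bound $6\Gsub{2}(P)-2\mathrm{b}(P)-5$, with equality for lattice triangles, which only strengthens the stated inequality (the degenerate one-dimensional cases, to which Pick's theorem does not apply, are excluded by the convention that a polygon is two-dimensional).
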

However, when dealing with an arbitrary convex body $K\in\K^n$, one cannot expect to get a discrete counterpart of \eqref{e:RS_K-K_vol} for the lattice point enumerator $\Gsub{n}(\cdot)$, namely,
\begin{equation*}
\Gsub{n}(K-K)\leq\binom{2n}{n}\Gsub{n}(K).
\end{equation*}
Indeed, just considering $K=[-1/2,1/2]^n$ one would obtain $3^n\leq \binom{2n}{n}$, which
is false for $n=1,2,3,4$. Moreover, as pointed out in \cite{FH}, where the authors consider certain
simplices with integer vertices, there is neither a possible extension of \eqref{e:GG} in dimension $n\geq3$
nor even a hope to get $\Gsub{n}(K-K)\leq c_n\Gsub{n}(K)$ for some constant $c_n>0$ depending only on
the dimension $n$, for $n\geq3$.

Altogether, and taking into account the ``behavior'' of the discrete version of the Brunn-Minkowski inequality
collected in \eqref{e: BM_lattice_point_no_G(K)G(L)>0},
an alternative to get such an inequality for the lattice
point enumerator would be to consider some extension of $K$ (by Minkowski adding certain cube) on the right-hand
side of \eqref{e:RS_K-K_vol} (or, more generally, on that of \eqref{e:RS_K_L_vol}).
In this regard, here we show the following:

\begin{theorem}\label{t:RS_K-K}
Let $K\subset\R^n$ be a non-empty convex bounded set. Then
\begin{equation}\label{e:RS_K-K}
\Gsub{n}(K-K)\leq\binom{2n}{n}\Gsub{n}\left(K+\left(-\frac{3}{4},\frac{3}{4}\right)^n\right).
\end{equation}
\end{theorem}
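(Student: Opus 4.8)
The plan is to squeeze the lattice point enumerator between volumes of suitable cube-thickenings and then invoke the classical Rogers--Shephard inequality \eqref{e:RS_K-K_vol}. Write $Q_r=\left[-\tfrac r2,\tfrac r2\right]^n$ for the closed centred cube of edge length $r$, so that $Q_1=[-1/2,1/2]^n$ and $Q_{1/2}=[-1/4,1/4]^n$. The two elementary facts I would isolate first are a pair of \emph{discretization lemmas} comparing $\Gsub n$ with volume, valid for every bounded set $M,N\subset\R^n$:
\begin{equation*}
\Gsub n(M)\le\vol\bigl(M+Q_1\bigr)\qquad\text{and}\qquad\vol(N)\le\Gsub n\bigl(N+(-1/2,1/2)^n\bigr).
\end{equation*}
Both follow from tiling $\R^n$ by the half-open unit cells $z+[-1/2,1/2)^n$, $z\in\Z^n$: for the upper bound one uses $\Gsub n(M)=\vol\bigl((M\cap\Z^n)+[-1/2,1/2)^n\bigr)\le\vol(M+Q_1)$, while for the lower bound one decomposes $\vol(N)=\sum_{z\in\Z^n}\vol\bigl(N\cap(z+[-1/2,1/2)^n)\bigr)$ and bounds each nonzero term by $1$.

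The core of the argument is then purely formal. Applying the upper discretization lemma to $M=K-K$ gives $\Gsub n(K-K)\le\vol\bigl((K-K)+Q_1\bigr)$. Since $Q_1=Q_{1/2}+Q_{1/2}=Q_{1/2}-Q_{1/2}$ (the cube being symmetric) and Minkowski addition distributes, one has the key identity
\begin{equation*}
(K-K)+Q_1=\bigl(K+Q_{1/2}\bigr)-\bigl(K+Q_{1/2}\bigr),
\end{equation*}
so the right-hand side is the difference body of the convex body $K':=K+Q_{1/2}=K+[-1/4,1/4]^n$. The classical Rogers--Shephard inequality \eqref{e:RS_K-K_vol} applied to $K'$ yields $\vol(K'-K')\le\binom{2n}{n}\vol(K')$. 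Finally, the lower discretization lemma applied to $N=K'$, together with $[-1/4,1/4]^n+(-1/2,1/2)^n=(-3/4,3/4)^n$, gives $\vol(K')\le\Gsub n\bigl(K+(-3/4,3/4)^n\bigr)$, and chaining the three estimates produces exactly \eqref{e:RS_K-K}.

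The delicate point --- and the step I would be most careful about --- is obtaining the \emph{open} cube $(-3/4,3/4)^n$ rather than the closed one, since the open cube carries (weakly) fewer lattice points and hence yields the stronger statement. This is precisely what the refined lower discretization lemma must deliver: in the volume decomposition of $N$, any cell $z+[-1/2,1/2)^n$ that meets $N$ but whose corresponding \emph{open} cell $z+(-1/2,1/2)^n$ is disjoint from $N$ can only meet $N$ inside the measure-zero boundary faces $\{x_i=z_i-1/2\}$, hence contributes nothing to $\vol(N)$. Discarding these cells lets one sum only over $z$ with $N\cap(z+(-1/2,1/2)^n)\neq\emptyset$, i.e. over $z\in\bigl(N+(-1/2,1/2)^n\bigr)\cap\Z^n$, which is exactly what the open-cube bound requires. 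A minor additional technicality is that $K$ is only assumed convex and bounded, not closed: the classical inequality \eqref{e:RS_K-K_vol} is stated for convex bodies, but since every quantity appearing at that step is a volume, one may freely replace $K'$ by $\cl K'$ there without altering anything, so this causes no difficulty.
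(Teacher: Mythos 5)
Your proposal is correct and follows essentially the same route as the paper: sandwich $\Gsub{n}$ between volumes via the two discretization estimates \eqref{e:relation_G<vol} and \eqref{e:relation_vol<G}, rewrite $(K-K)+$ cube as the difference body of $K$ thickened by a half-size cube, and apply the classical Rogers--Shephard inequality \eqref{e:RS_K-K_vol} to that thickened set. The only cosmetic difference is your use of closed cubes in the intermediate steps (the paper uses open ones, which changes nothing up to sets of measure zero), and your careful justifications of the open-cube lower bound and of dropping the closedness hypothesis match the paper's remarks.
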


When considering the Minkowski sum of two non-empty convex bounded sets $K,L\subset\R^n$, instead of $K-K$, we have the following discrete counterpart of \eqref{e:RS_K_L_vol}:
\begin{theorem}\label{t:RS_K_L}
Let $K,L\subset\R^n$ be non-empty convex bounded sets and
let
\[
c_{_{K,L}}=\frac{\vol\left(K+L+\left(-\frac{1}{2},\frac{1}{2}\right)^n\right)}{\vol\bigl(K+L+(-1,1)^n\bigr)}\in(0,1).
\]
Then
\begin{equation}\label{e:RS_K_L}
\Gsub{n}(K+L)\Gsub{n}\bigl(K\cap(-L)\bigr)
\leq\binom{2n}{n} c_{_{K,L}}\,\Gsub{n}\bigl(K+(-1,1)^n\bigr)\Gsub{n}\bigl(L+(-1,1)^n\bigr).
\end{equation}
In particular, taking $L=-K$, with $0\in K$,
\begin{equation}\label{e:RS_K-K_2}
\Gsub{n}(K-K)
\leq\binom{2n}{n} c_{_{K,-K}}\,\frac{\Gsub{n}\bigl(K+(-1,1)^n\bigr)^2}{\Gsub{n}(K)}.
\end{equation}
\end{theorem}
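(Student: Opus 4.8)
The plan is to prove the discrete Rogers--Shephard inequality \eqref{e:RS_K_L} by transferring the continuous inequality \eqref{e:RS_K_L_vol} to the lattice setting via a volume-versus-lattice-point comparison, using the half-open unit cube as the fundamental tool. The key observation linking $\Gsub{n}(\cdot)$ to $\vol(\cdot)$ is that for any bounded set $M$, placing a unit cell at each integer point of $M$ yields
\begin{equation}\label{e:plan_cube_identity}
\Gsub{n}(M)=\vol\Bigl(\bigl(M\cap\Z^n\bigr)+\bigl[-\tfrac{1}{2},\tfrac{1}{2}\bigr)^n\Bigr),
\end{equation}
since the translated half-open cubes tile space without overlap. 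The strategy is then to sandwich each lattice-point count between volumes of suitably inflated and deflated bodies: on one hand $\Gsub{n}(M)\le\vol\bigl(M+[-\tfrac12,\tfrac12]^n\bigr)$ because every unit cell centred at an integer point of $M$ is contained in $M+[-\tfrac12,\tfrac12]^n$; on the other hand, for a \emph{convex} body $M$ one has the reverse-type estimate $\vol\bigl(M+(-\tfrac12,\tfrac12)^n\bigr)\le\Gsub{n}\bigl(M+(-\tfrac12,\tfrac12)^n\bigr)$, again by the tiling from \eqref{e:plan_cube_identity}. These two directions are what allow one to pass back and forth between the measure and the enumerator while only paying the price of a cube-inflation.

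First I would apply the two upper bounds $\Gsub{n}(K+L)\le\vol\bigl(K+L+[-\tfrac12,\tfrac12]^n\bigr)$ and $\Gsub{n}\bigl(K\cap(-L)\bigr)\le\vol\bigl((K\cap(-L))+[-\tfrac12,\tfrac12]^n\bigr)$ to the left-hand side of \eqref{e:RS_K_L}. The crucial algebraic step is to recognise that $\bigl(K\cap(-L)\bigr)+[-\tfrac12,\tfrac12]^n\subseteq\bigl(K+[-\tfrac12,\tfrac12]^n\bigr)\cap\bigl((-L)+[-\tfrac12,\tfrac12]^n\bigr)=\widetilde K\cap(-\widetilde L)$, where $\widetilde K=K+[-\tfrac12,\tfrac12]^n$ and $\widetilde L=L+[-\tfrac12,\tfrac12]^n$, so that the product of the two volumes on the left is bounded by $\vol\bigl(\widetilde K+\widetilde L\bigr)\,\vol\bigl(\widetilde K\cap(-\widetilde L)\bigr)$ after noting $K+L+[-\tfrac12,\tfrac12]^n\subseteq \widetilde K+\widetilde L$. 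Next I would invoke the continuous inequality \eqref{e:RS_K_L_vol} applied to the pair $\widetilde K,\widetilde L$, yielding the bound $\binom{2n}{n}\vol(\widetilde K)\vol(\widetilde L)$.

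The final step is to convert $\vol(\widetilde K)=\vol\bigl(K+[-\tfrac12,\tfrac12]^n\bigr)$ back into a lattice count. Here one uses the complementary inflation: since $K+(-\tfrac12,\tfrac12)^n\subseteq K+(-1,1)^n$ and the latter is open enough to guarantee that every unit cell centred at its integer points fits inside it, the lower bound $\vol\bigl(K+[-\tfrac12,\tfrac12]^n\bigr)\le\Gsub{n}\bigl(K+(-1,1)^n\bigr)$ holds, and likewise for $L$. Assembling these estimates produces $\binom{2n}{n}\Gsub{n}\bigl(K+(-1,1)^n\bigr)\Gsub{n}\bigl(L+(-1,1)^n\bigr)$, and the refined constant $c_{_{K,L}}$ is extracted by tracking the $K+L$ factor through the chain and comparing $\vol\bigl(K+L+(-\tfrac12,\tfrac12)^n\bigr)$ against $\vol\bigl(K+L+(-1,1)^n\bigr)$, which is exactly the ratio defining $c_{_{K,L}}$. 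The inequality \eqref{e:RS_K-K_2} then follows immediately by specialising to $L=-K$ with $0\in K$, using $\Gsub{n}\bigl(K\cap(-(-K))\bigr)=\Gsub{n}(K)$. I expect the main obstacle to be the bookkeeping that isolates the factor $c_{_{K,L}}$ rather than simply obtaining the cruder bound $\binom{2n}{n}\Gsub{n}\bigl(K+(-1,1)^n\bigr)\Gsub{n}\bigl(L+(-1,1)^n\bigr)$: one must carefully avoid double-counting the cube inflation on the $K+L$ term, ensuring that the set inclusions are tight enough that the volume comparison collapses to precisely the stated ratio and lands in $(0,1)$.
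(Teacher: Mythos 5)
Your argument is correct and follows essentially the same route as the paper: bound $\Gsub{n}$ from above by $\vol\bigl(\cdot+(-\tfrac12,\tfrac12)^n\bigr)$, push the half-cube inside the intersection via $(A\cap B)+C\subset(A+C)\cap(B+C)$, apply the continuous Rogers--Shephard inequality to $K+(-\tfrac12,\tfrac12)^n$ and $L+(-\tfrac12,\tfrac12)^n$, and return to $\Gsub{n}$ on the right-hand side, with $c_{_{K,L}}$ arising exactly as you describe from the identity $\vol\bigl(K+L+(-\tfrac12,\tfrac12)^n\bigr)=c_{_{K,L}}\vol\bigl(K+L+(-1,1)^n\bigr)$. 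One caveat: the auxiliary estimate stated in your overview, $\vol\bigl(M+(-\tfrac12,\tfrac12)^n\bigr)\le\Gsub{n}\bigl(M+(-\tfrac12,\tfrac12)^n\bigr)$, is false in general (take $M=\{\tfrac12\}\subset\R^1$, where the left side is $1$ and the right side is $0$); the correct form is $\vol(M)\le\Gsub{n}\bigl(M+(-\tfrac12,\tfrac12)^n\bigr)$, which is what your final step $\vol\bigl(K+(-\tfrac12,\tfrac12)^n\bigr)\le\Gsub{n}\bigl(K+(-1,1)^n\bigr)$ actually uses, so the assembled chain is unaffected.
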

Both \eqref{e:RS_K-K} and \eqref{e:RS_K_L} (and thus also \eqref{e:RS_K-K_2}) are \emph{asymptotically
sharp} (in the sense that, for any of these inequalities, there exist convex bodies containing the origin in such a way that the ratio between the right-hand and the left-hand sides, when applied to dilations of these sets scaled by a factor $r>0$, tends to $1$ as $r\to\infty$;
see Remark~\ref{r:sharp}) and, even more, they imply the continuous versions \eqref{e:RS_K-K_vol} and \eqref{e:RS_K_L_vol}, respectively.

Furthermore, we will also show in Section~\ref{s:RS} an alternative discrete version of \eqref{e:RS_K_L_vol}
involving both the Minkowski difference and the addition of the sets and the cube $(-1,1)^n$ (see~Theorem \ref{t:RS_diff_refined}).

\medskip

In \cite[Theorem~1]{RS2}, Rogers and Shephard also gave the following
lower bound for the volume of a convex body $K\in\K^n$ in terms of the volumes of a projection
and a section of $K$. Before recalling its precise statement we need some auxiliary notation:
the set of all $k$-dimensional linear subspaces of $\R^n$ is denoted by $\L^n_{k}$,
and for $H\in\L^n_{k}$, the orthogonal projection of $M$ onto $H$ is denoted by $P_HM$;
moreover, as usual, $H^{\perp}\in \L^n_{n-k}$ represents the orthogonal complement of $H$.
\begin{thm}\label{t:RS_sect_proj_vol}
Let $k\in\{1,\dots,n-1\}$ and $H\in\L^n_{k}$. Let $K\in\K^n$ be a convex body. Then
\begin{equation}\label{e:RS_sect_proj_vol}
\vol_{n-k}(P_{H^\perp} K)\vol_k(K\cap H)\leq\binom{n}{k}\vol(K).
\end{equation}
\end{thm}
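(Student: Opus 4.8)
The plan is to slice $K$ by the translates of $H$, reduce the statement to a one-dimensional integral inequality of Berwald type for the slice-volume function, and then exploit the concavity granted by Brunn's theorem together with a nested super-level set inclusion. First I would work in the orthogonal decomposition $\R^n=H\oplus H^\perp$. We may assume $\vol(K)>0$ and $\vol_k(K\cap H)>0$, since otherwise the left-hand side of \eqref{e:RS_sect_proj_vol} vanishes and there is nothing to prove. Write $P=P_{H^\perp}K\subset H^\perp$ and, for each $y\in P$, let $K_y=\{x\in H:\,x+y\in K\}$ denote the corresponding section, so that $g(y):=\vol_k(K_y)$ is the slice-volume function. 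Since $K$ is convex, for $y_0,y_1\in P$ and $\lambda\in(0,1)$ one has $(1-\lambda)K_{y_0}+\lambda K_{y_1}\subseteq K_{(1-\lambda)y_0+\lambda y_1}$, whence the Brunn-Minkowski inequality \eqref{e:BM} shows that $g^{1/k}$ is concave on $P$ (Brunn's theorem). By Fubini's theorem $\int_P g(y)\,\dlat y=\vol(K)$; moreover $0\in P$ and $g(0)=\vol_k(K\cap H)$. Thus \eqref{e:RS_sect_proj_vol} reduces to the integral inequality
\[
g(0)\,\vol_{n-k}(P)\leq\binom{n}{k}\int_P g(y)\,\dlat y .
\]

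To prove this, set $h=g^{1/k}$; as both sides are homogeneous of degree one in $g$, I may normalize so that $h(0)=1$. For $s\in[0,1]$ consider the convex super-level sets $P_s=\{y\in P:\,h(y)\geq s\}$. The crucial step is the inclusion $(1-s)P\subseteq P_s$: for $y\in P$, concavity of $h$ together with $h\geq0$ and $h(0)=1$ gives $h\bigl((1-s)y\bigr)\geq(1-s)h(y)+s\,h(0)\geq s$, so that $(1-s)y\in P_s$. Consequently $\vol_{n-k}(P_s)\geq(1-s)^{n-k}\vol_{n-k}(P)$. Combining the layer-cake formula $\int_P h^k\,\dlat y=\int_0^\infty k\,s^{k-1}\vol_{n-k}(P_s)\,\dlat s$ with this bound yields
\[
\int_P h^k\,\dlat y\geq\vol_{n-k}(P)\int_0^1 k\,s^{k-1}(1-s)^{n-k}\,\dlat s=\frac{\vol_{n-k}(P)}{\binom{n}{k}},
\]
where the last equality is the Beta-function identity $k\,B(k,n-k+1)=k!\,(n-k)!/n!$. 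Since $h(0)=1$, this is exactly the claimed integral inequality, and undoing the normalization completes the proof.

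The conceptual heart, and the step most prone to going wrong, is the super-level set inclusion $(1-s)P\subseteq P_s$, which is precisely what converts the qualitative concavity of $g^{1/k}$ into the \emph{sharp} constant $\binom{n}{k}$; I would stress that it uses only $h(0)=1$ and $h\geq0$, and never the assumption that the central slice $K\cap H$ is the largest one. The two reductions (Brunn's concavity and Fubini's theorem) are routine, and the final step is the standard Beta integral. Tracking equality back through these estimates forces $\vol_{n-k}(P_s)=(1-s)^{n-k}\vol_{n-k}(P)$ for every $s$, which recovers the classical extremizers, namely the cones with base $K\cap H$.
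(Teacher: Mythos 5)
Your proof is correct. It takes a genuinely different route from the one the paper presents: the paper obtains Theorem~\ref{t:RS_sect_proj_vol} as a consequence of Berwald's inequality (Theorem~\ref{t:Berwald_classical}), applied on the $(n-k)$-dimensional body $P_{H^\perp}K$ with $p=k$ to the Brunn-concave function $f(x)=\vol_k\bigl(K\cap(x+H)\bigr)^{1/k}$ and then letting $q\to\infty$, whereas you prove the required estimate directly. In effect, your chain --- the inclusion $(1-s)P\subseteq P_s$, the bound $\vol_{n-k}(P_s)\geq(1-s)^{n-k}\vol_{n-k}(P)$, the layer-cake formula and the Beta integral --- is a self-contained proof of precisely the degenerate case $p=k$, $q=\infty$ of Berwald's inequality, and it is the same mechanism that powers the general Berwald argument (compare the cone comparison function $h_m$ in Lemma~\ref{l:sum_h_m geq m}). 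What your version buys is economy (no limit $q\to\infty$, no general Berwald statement) and transparency about hypotheses: as you note, only $h(0)$ enters, so the central slice need not be the maximal one; the Berwald route also covers this, via $f(0)\leq|f|_\infty$, but less visibly. It is also worth observing that your continuous argument is the exact analogue of the paper's own proof of the discrete refinement, Theorem~\ref{t:RS_sect_proj_refined}, which runs the same superlevel-set/Beta-integral scheme with the lattice point enumerator and the discrete Brunn--Minkowski inequality \eqref{e: BM_lattice_point_no_G(K)G(L)>0} in place of the volume and \eqref{e:BM}. Only two routine points deserve half a sentence in a polished write-up: the sections $K_y$ may be lower-dimensional or degenerate, so \eqref{e:BM} must be invoked in its general compact-convex form, and the closed superlevel sets $\{h\geq s\}$ versus the open ones appearing in the layer-cake identity differ in measure for at most countably many levels $s$.
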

In this paper we will show that the above result also admits a suitable discrete version for
the lattice point enumerator $\Gsub{n}(\cdot)$. More precisely, we obtain:
\begin{theorem}\label{t:RS_sect_proj}
Let $k\in\{1,\dots,n-1\}$ and $H=\lin\{\e_1,\dots,\e_k\}\in\L^n_{k}$.
Let $K\subset\R^n$ be a non-empty convex bounded set. Then
\begin{equation}\label{e:RS_sect_proj}
\Gsub{n-k}(P_{H^\perp} K)\Gsub{k}(K\cap H)\leq\binom{n}{k}\Gsub{n}\bigl(K+(-1,1)^n\bigr).
\end{equation}
\end{theorem}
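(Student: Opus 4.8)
The plan is to sandwich the two discrete factors on the left of \eqref{e:RS_sect_proj} between volumes of half-cube enlargements, apply the continuous projection--section inequality (Theorem~\ref{t:RS_sect_proj_vol}) to a suitable convex body, and then translate the resulting volume back into a lattice-point count. The key accounting is that the two half-cubes $(-1/2,1/2)^n$ that appear — one to pass from counting to volume, one to pass from volume back to counting — add up to exactly the cube $(-1,1)^n$ on the right-hand side. Throughout I use that $H=\lin\{\e_1,\dots,\e_k\}$ and $H^\perp=\lin\{\e_{k+1},\dots,\e_n\}$ are coordinate subspaces, so that $\Z^n$ splits as $(\Z^k\times\{0\})\oplus(\{0\}\times\Z^{n-k})$ and the lattice points of $\R^n$ lying in $H$ (resp.\ $H^\perp$) are exactly those of $\Z^k$ (resp.\ $\Z^{n-k}$). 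Write $N=\Gsub{n-k}(P_{H^\perp}K)$, $m=\Gsub{k}(K\cap H)$, and set $K'=\cl(K)+[-1/2,1/2]^n\in\K^n$.

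First I would record two elementary transfer estimates, valid for any bounded $A\subseteq\R^d$, both coming from the tiling of $\R^d$ by the open unit cubes centred at the lattice points. Since the cubes $p+(-1/2,1/2)^d$, $p\in\Z^d$, are pairwise disjoint of volume one, packing those with $p\in A$ inside $A+(-1/2,1/2)^d$ gives
\[
\Gsub{d}(A)\le\vol_d\bigl(A+(-1/2,1/2)^d\bigr),
\]
while covering almost every point of $A$ by the cube around its nearest lattice point gives
\[
\vol_d(A)\le\Gsub{d}\bigl(A+(-1/2,1/2)^d\bigr).
\]
Applying the first estimate in $H^\perp\cong\R^{n-k}$ and in $H\cong\R^{k}$, and using the inclusions $P_{H^\perp}K+(-1/2,1/2)^{n-k}\subseteq P_{H^\perp}K'$ and $(K\cap H)+(-1/2,1/2)^{k}\subseteq K'\cap H$ (the latter because adding a vector of $(-1/2,1/2)^k\times\{0\}$ to a point of $K\cap H$ keeps it in both $H$ and $K'$), I obtain
\[
N\le\vol_{n-k}(P_{H^\perp}K')\qquad\text{and}\qquad m\le\vol_{k}(K'\cap H).
\]

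Next I would apply the continuous projection--section inequality \eqref{e:RS_sect_proj_vol} to the convex body $K'$ with the same subspace $H$, yielding
\[
\vol_{n-k}(P_{H^\perp}K')\,\vol_{k}(K'\cap H)\le\binom{n}{k}\vol_n(K').
\]
Finally, the second transfer estimate applied to $B=K+(-1/2,1/2)^n$ turns this volume back into a lattice count:
\[
\vol_n(K')=\vol_n\bigl(K+(-1/2,1/2)^n\bigr)\le\Gsub{n}\bigl(K+(-1/2,1/2)^n+(-1/2,1/2)^n\bigr)=\Gsub{n}\bigl(K+(-1,1)^n\bigr),
\]
the last equality being the crucial identity $(-1/2,1/2)^n+(-1/2,1/2)^n=(-1,1)^n$. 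Chaining the three displays gives
\[
N\,m\le\vol_{n-k}(P_{H^\perp}K')\,\vol_{k}(K'\cap H)\le\binom{n}{k}\vol_n(K')\le\binom{n}{k}\Gsub{n}\bigl(K+(-1,1)^n\bigr),
\]
which is exactly \eqref{e:RS_sect_proj}.

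The only genuinely content-bearing input is the continuous inequality \eqref{e:RS_sect_proj_vol}; everything else is bookkeeping, and the main thing to get right is the cube budget, namely that the half-cube spent to dominate $N$ and $m$ by the volumes of the projection and the section of $K'$, and the half-cube spent to convert $\vol_n(K')$ back into lattice points, combine to the single cube $(-1,1)^n$. The points needing (routine) care are verifying the coordinate-subspace facts $P_{H^\perp}K+(-1/2,1/2)^{n-k}\subseteq P_{H^\perp}K'$ and $(K\cap H)+(-1/2,1/2)^{k}\subseteq K'\cap H$; passing to $\cl(K)$ so that Theorem~\ref{t:RS_sect_proj_vol} applies to $K'$, which is automatically full-dimensional (so no degenerate case arises even when $K$ itself is lower-dimensional); and the measure-zero set on which the nearest-lattice-point map is ambiguous in the second transfer estimate. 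I note that one can instead argue intrinsically, slicing $\Gsub{n}(K+(-1,1)^n)$ along $H^\perp$ and invoking a discrete Berwald-type inequality for the section-counting function; the volumetric transfer above is shorter precisely because it is allowed to quote \eqref{e:RS_sect_proj_vol}.
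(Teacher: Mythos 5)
Your argument is correct and coincides, essentially line for line, with the paper's own alternative proof of this theorem given in Remark~\ref{r:new proof Theor projsect}: the two transfer estimates you record are exactly \eqref{e:relation_G<vol} and \eqref{e:relation_vol<G}, and the cube budget $(-1/2,1/2)^n+(-1/2,1/2)^n=(-1,1)^n$ is accounted for in the same way. The only difference is that the paper's \emph{primary} proof instead derives the theorem (under the extra hypothesis $0\in K$) from the stronger Theorem~\ref{t:RS_sect_proj_refined} via a discrete Brunn--Minkowski argument on superlevel sets, a refinement that, as the paper notes, cannot be reached by the volumetric transfer route you (and the remark) use.
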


\smallskip

A classical result due to Berwald \cite{Ber} (see also \cite{AAGJV,ABG} for other extensions and considerations),
from which the Rogers-Shephard inequalities \eqref{e:RS_K-K_vol} and \eqref{e:RS_sect_proj_vol} can be derived,
relates certain weighted power means of a concave function, as follows:
\begin{thm}[Berwald's inequality]\label{t:Berwald_classical}
Let $K\in\K^n$ be a convex body with $\dim K=n$ and let $f:K\longrightarrow\R_{\geq0}$ be a concave function.
Then, for any $0<p<q$,
\begin{equation}\label{e:Berwald_classical}
\left(\frac{\binom{n+q}{n}}{\vol(K)}\int_{K}f^{q}(x)\,\dlat x\right)^{1/q}
\leq\left(\frac{\binom{n+p}{n}}{\vol(K)}\int_{K}f^{p}(x)\,\dlat x\right)^{1/p}.
\end{equation}
\end{thm}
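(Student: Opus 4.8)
The plan is to reduce the inequality to a one-dimensional comparison between $f$ and the extremal \emph{linear} profile, using the layer-cake formula together with the Brunn-Minkowski inequality \eqref{e:BM}, and then to run a single-crossing (rearrangement) argument against that extremizer. First I would pass from $f$ to its volume profile. Since $f$ is concave and non-negative, each superlevel set $\{x\in K:f(x)\ge t\}$ is convex, and \eqref{e:BM} (applied to the inclusion $\{f\ge(1-\lambda)s+\lambda t\}\supseteq(1-\lambda)\{f\ge s\}+\lambda\{f\ge t\}$) shows that $\psi(t):=\vol\bigl(\{x\in K:f(x)\ge t\}\bigr)^{1/n}$ is concave and non-increasing on $[0,M]$, with $M=\max_K f$ and $\psi(0)=\vol(K)^{1/n}$. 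The layer-cake formula then rewrites every moment of $f$ in terms of $\psi$ alone: for each $r>0$,
\[
\int_K f(x)^r\,\dlat x = r\int_0^M t^{r-1}\psi(t)^n\,\dlat t,
\]
so the whole statement becomes an assertion about the single concave function $\psi$, and the body $K$ may be forgotten.

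Next I would single out the extremizer. For the linear profile $\psi_c(t)=\vol(K)^{1/n}(1-t/c)_+$ (which corresponds to $f$ affine on a cone), a Beta-integral computation gives
\[
\frac{\binom{n+r}{n}}{\vol(K)}\int_0^\infty r\,t^{r-1}\psi_c(t)^n\,\dlat t=\binom{n+r}{n}\,r\,c^r\,\frac{\Gamma(r)\Gamma(n+1)}{\Gamma(r+n+1)}=c^r,
\]
so the normalized power mean of $\psi_c$ equals $c$ for \emph{every} exponent $r$; this is exactly the equality case of Berwald's inequality. I would then choose $c=\bigl(\binom{n+p}{n}\vol(K)^{-1}\int_K f^p\,\dlat x\bigr)^{1/p}$ so that the $p$-th normalized means of $\psi$ and $\psi_c$ coincide, which forces $\int_0^\infty t^{p-1}\bigl(\psi(t)^n-\psi_c(t)^n\bigr)\,\dlat t=0$. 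Note that concavity of $\psi$ puts it above its own chord, hence $\psi\ge\psi_c$ with $c=M$ pointwise comparison yields $c=\mathcal{M}_p(\psi)\ge M$, so the extremal line extends at least as far as the support of $\psi$.

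The heart of the argument, and the step I expect to be the main obstacle, is the crossing-and-transfer estimate. Extending $\psi$ by zero past $M$, the difference $\psi-\psi_c$ is concave on $[0,M]$ with value $0$ at the origin, is strictly negative on $(M,c)$ (there $\psi=0<\psi_c$), and vanishes beyond $c$; a short concavity argument shows its nonnegativity set is an initial interval $[0,t_0]$, so $\psi-\psi_c$ changes sign exactly once, from $+$ to $-$. Consequently $g:=\psi^n-\psi_c^n$ satisfies $g\ge0$ on $[0,t_0]$ and $g\le0$ afterwards. Writing $t^{q-1}=t^{q-p}\cdot t^{p-1}$ and using that $t^{q-p}-t_0^{q-p}$ has the opposite sign to $g$ across $t_0$ (because $q>p$), adding the multiple $t_0^{q-p}$ of the vanishing integral $\int_0^\infty t^{p-1}g(t)\,\dlat t=0$ costs nothing and gives
\[
\int_0^\infty t^{q-1}g(t)\,\dlat t=\int_0^\infty\bigl(t^{q-p}-t_0^{q-p}\bigr)t^{p-1}g(t)\,\dlat t\le0,
\]
since the last integrand is pointwise nonpositive. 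Hence $\int_0^\infty t^{q-1}\psi^n\le\int_0^\infty t^{q-1}\psi_c^n$, so the $q$-th normalized mean of $\psi$ is at most $c$, which is precisely its $p$-th mean; taking $q$-th roots delivers \eqref{e:Berwald_classical}. The delicate points to verify carefully are the concavity of $\psi$ from \eqref{e:BM}, the bound $c\ge M$, and the single-crossing claim (in particular handling the kink at $t=M$, where $\psi$ meets its zero extension, and the case where $f$ attains its maximum on a set of positive measure).
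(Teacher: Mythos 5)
Your proof is correct. The paper itself does not prove Theorem~\ref{t:Berwald_classical} (it is quoted from Berwald's paper), but your argument --- passing to the concave profile $\psi(t)=\vol(\{f\geq t\})^{1/n}$ via \eqref{e:BM}, normalizing against the linear extremizer so the $p$-means agree, establishing a single sign change at some $t_0$, and transferring via the factor $t^{q-p}-t_0^{q-p}$ --- is precisely the continuous template of which the paper's proof of the discrete analogue (Lemma~\ref{l:ineqs_g(t)_t_0} together with the proof of Theorem~\ref{t:Berwald}) is an adaptation, so this is essentially the same approach.
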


As mentioned, it is remarkable to point out that the above result provides one with an alternative unified proof of both Theorems \ref{t:RS_vol} and \ref{t:RS_sect_proj_vol}, either by considering the function $f:P_{H^\perp}K\longrightarrow\R_{\geq0}$ given by \[f(x)=\vol_k\bigl(K\cap(x+H\bigr)\bigr)^{1/k}\]
for $H\in\L^n_k$, $p=k$ (and $n'=n-k$), and letting $q\to\infty$, or by considering the function $f:K-K\longrightarrow\R_{\geq0}$ defined by \[f(x)=\vol\bigl(K\cap(x+K)\bigr)^{1/n},\]
$p=n$, and letting $q\to\infty$ (see~Section
\ref{s:Berwald} for the precise details on the  obtention of Theorems \ref{t:RS_vol} and \ref{t:RS_sect_proj_vol} from Theorem \ref{t:Berwald_classical}).


Therefore, one may wonder whether Berwald's inequality also admits a discrete analogue. To establish its statement, we need the following notation (see \cite{IYNZ} and the references therein for more about this notion): for a function $\phi:K\longrightarrow\R_{\geq0}$ defined on a convex bounded set $K\subset\R^n$, we denote by $\phi^{\symbol}:K+(-1,1)^n\longrightarrow\R_{\geq0}$ the function given by	
\[\phi^{\symbol}(z) = \sup_{u\in(-1,1)^n}\overline{\phi}(z+u)  \quad \text{ for all } z\in K+(-1,1)^n,\]
where $\overline{\phi}:\R^n\longrightarrow\R_{\geq0}$ is the function defined by
\[
\overline{\phi}(x)=\left\{
\begin{array}{ll}
\phi(x) & \text{ if } x\in K,\\[2mm]
0 & \text{ otherwise}.
\end{array}
\right.
\]
In other words, $\phi^{\symbol}$ is the function whose hypograph is the closure of the Minkowski sum of the hypograph of $\phi$ and $(-1,1)^n\times\{0\}$.

Here we show the following discrete analogue of Berwald's inequality \eqref{e:Berwald_classical}:
\begin{theorem}\label{t:Berwald}
Let $K\subset\R^n$ be a convex bounded set containing the origin
and let $f:K\longrightarrow\R_{\geq0}$ be a concave function with $f(0)=|f|_\infty$.
Then, for any $0<p<q$,
\begin{equation}\label{e:Berwald}
\left(\frac{\binom{n+q}{n}}{\Gsub{n}(K)}\sum_{x\in K\cap\Z^n}f^{q}(x)\right)^{1/q}
\leq\left(\frac{\binom{n+p}{n}}{\Gsub{n}(K)}\sum_{x\in (K+(-1,1)^n)\cap\Z^n}
\bigl(f^{\symbol}\bigr)^{p}(x)\right)^{1/p}.
\end{equation}
\end{theorem}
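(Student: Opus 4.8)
The plan is to reduce the statement to the continuous Berwald inequality (Theorem~\ref{t:Berwald_classical}) by passing through the \emph{distribution functions} of the two sides, using the discrete Brunn--Minkowski inequality (Theorem~\ref{t: BM_lattice_point_no_G(K)G(L)>0}) to supply the concavity that the discrete setting lacks. Write $M=|f|_\infty=f(0)$ and, for $t\in[0,M]$, consider the superlevel sets $K_t=\{x\in K:\,f(x)\geq t\}$, which are convex, contain the origin (so they are non-empty), and decrease with $t$; note also that $0\in K$ gives $\Gsub{n}(K)\geq1$. A layer-cake computation over the lattice yields $\sum_{x\in K\cap\Z^n}f^q(x)=\int_0^M q\,t^{q-1}\Gsub{n}(K_t)\,\dlat t$. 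On the other side, since the hypograph of $f^{\symbol}$ is the closure of $\hyp(f)+(-1,1)^n\times\{0\}$, one checks that $K_t+(-1,1)^n\subseteq\{f^{\symbol}\geq t\}$, whence $\sum_{x\in(K+(-1,1)^n)\cap\Z^n}(f^{\symbol})^p(x)\geq\int_0^M p\,t^{p-1}\Gsub{n}\bigl(K_t+(-1,1)^n\bigr)\,\dlat t$ (the closure can only add lattice points, which helps). Thus it suffices to compare these two one-dimensional integrals, for which I set $\psi(t)=\Gsub{n}(K_t)^{1/n}$ and $w(t)=\Gsub{n}\bigl(K_t+(-1,1)^n\bigr)^{1/n}$.

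The crucial step, where the discreteness is absorbed, is to produce a single \emph{concave} function squeezed between $\psi$ and $w$. Since $f$ is concave, $K_t\supseteq(1-\lambda)K_{t_0}+\lambda K_{t_1}$ whenever $t=(1-\lambda)t_0+\lambda t_1$; combining the monotonicity of $\Gsub{n}$ with Theorem~\ref{t: BM_lattice_point_no_G(K)G(L)>0} applied to $K_{t_0}$ and $K_{t_1}$ gives $w(t)\geq(1-\lambda)\psi(t_0)+\lambda\psi(t_1)$. Taking the supremum over all such representations shows that $w$ dominates the concave envelope $\Phi$ of $\psi$, while trivially $\psi\leq\Phi$; moreover $\Phi$ is non-increasing, and since $t=0$ admits only the trivial representation one has $\Phi(0)=\psi(0)=\Gsub{n}(K)^{1/n}$. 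Hence $\psi\leq\Phi\leq w$ on $[0,M]$ with $\Phi$ concave. I view this asymmetry---$\psi$ (no cube) bounding the left integral from above and $w$ (with the cube) bounding the right integral from below---as the heart of the matter, and it is exactly what forces $f$ on the left of \eqref{e:Berwald} to be replaced by $f^{\symbol}$ on the right.

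It then remains to feed $\Phi$ into the continuous inequality. Rewriting Theorem~\ref{t:Berwald_classical} through its own layer-cake representation expresses it purely in terms of the distribution function $V(t)=\vol(\{F\geq t\})$ together with the fact that $V^{1/n}$ is concave; conversely, any concave non-increasing $\Phi\geq0$ arises as such a $V^{1/n}$ by letting $F$ be the concave function on a Euclidean ball whose superlevel set at height $t$ is the centred ball of volume $\Phi(t)^n$. Applying this distributional form of Berwald's inequality to $V=\Phi^{n}$ (so $V(0)=\Gsub{n}(K)$) compares $\bigl(\binom{n+q}{n}\Gsub{n}(K)^{-1}\int_0^M q\,t^{q-1}\Phi(t)^{n}\,\dlat t\bigr)^{1/q}$ with the analogous $p$-expression. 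Chaining this with the two estimates of the first paragraph---$\sum_{x\in K\cap\Z^n} f^q(x)\leq\int_0^M q\,t^{q-1}\Phi(t)^{n}\,\dlat t$ (from $\psi\leq\Phi$) and $\sum_{x\in(K+(-1,1)^n)\cap\Z^n}(f^{\symbol})^p(x)\geq\int_0^M p\,t^{p-1}\Phi(t)^{n}\,\dlat t$ (from $w\geq\Phi$)---yields \eqref{e:Berwald}. I expect the main obstacle to be precisely the concavity bridge of the second paragraph: the discrete Brunn--Minkowski inequality only delivers concavity \emph{after} enlarging each level set by the cube, so one must verify that the concave envelope interpolates correctly and, in particular, that it loses nothing at $t=0$, keeping the normalising value exactly $\Gsub{n}(K)$.
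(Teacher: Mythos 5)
Your argument is correct, but it takes a genuinely different route from the paper's. The paper never invokes the continuous Berwald inequality: it sets $m$ equal to the right-hand side of \eqref{e:Berwald}, proves via the auxiliary cone-type function $h_m$ of Lemma~\ref{l:sum_h_m geq m} that $m\geq|f|_\infty$, and then establishes in Lemma~\ref{l:ineqs_g(t)_t_0} a single-crossing property: there is a $t_0$ such that the model profile $g(t)=(1-t/m)^n\Gsub{n}(K)$ lies below $\Gsub{n}\bigl(\{f^{\symbol}>t\}\bigr)$ for $t<t_0$ and above $\Gsub{n}\bigl(\{f>t\}\bigr)$ for $t\geq t_0$; the conclusion then follows from the classical weighted-crossing argument with the monotone factor $t^{q-p}$, i.e.\ it is a discrete re-run of Berwald's original proof. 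You instead compress all the discrete input into the single sandwich $\psi\leq\Phi\leq w$ and then feed the concave, non-increasing envelope $\Phi$ into Theorem~\ref{t:Berwald_classical} as a black box, realising $\Phi^n$ as the volume distribution function of a radially decreasing concave function on a Euclidean ball of volume $\Gsub{n}(K)$. Both proofs apply Theorem~\ref{t: BM_lattice_point_no_G(K)G(L)>0} to convex combinations of superlevel sets in exactly the same way, and this is where the cube on the right-hand side is forced in each case; your route buys modularity and brevity at the price of importing the continuous result, while the paper's route is self-contained and exhibits the extremal profile $g$ and the crossing point $t_0$ explicitly. When writing yours up you should record two small verifications: first, that the two-point concavification is the least concave majorant in dimension one and is non-increasing because $\Phi(t)\leq\max\{\psi(t_0),\psi(t_1)\}\leq\psi(0)=\Phi(0)$, so $\Phi$ attains its maximum at $0$ (monotonicity is what legitimises the radial realisation of $\Phi^n$ as a distribution function, and the realisation itself uses that $\{(x,t):\enorm{x}\leq c\,\Phi(t)\}$ is convex); and second, that the discrepancy between $\{f\geq t\}$ and $\{f>t\}$ in the layer-cake identities occurs only at the countably many discontinuities of the monotone functions $t\mapsto\Gsub{n}(\cdot)$ and hence does not affect any of the integrals.
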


\begin{remark}
Let us point out that the origin does not play any role in the latter theorem and the assumption on the origin could be substituted by $|f|_\infty=f(x_0)$ for some $x_0\in\Z^n$. It is, however, needed in our proof that, that the maximum of $f$ is attained at some point in $\Z^n$.
\end{remark}
As we will see along the manuscript, all our discrete analogues for the lattice point enumerator
$\Gsub{n}(\cdot)$ imply the corresponding classical inequalities (see Theorems \ref{t:RS_disc_imply_cont} and \ref{t:Berw_disc_imply_cont}).

\smallskip

The paper is organized as follows:
in Section \ref{s:RS-projsect} we recall some preliminaries and we derive Theorem \ref{t:RS_sect_proj}
as a consequence of a stronger inequality, collected in Theorem \ref{t:RS_sect_proj_refined}.
In Section \ref{s:RS} we obtain our discrete Rogers-Shephard type inequalities
(in particular, we prove both Theorems \ref{t:RS_K-K} and \ref{t:RS_K_L}) and we show that these discrete
analogues imply the corresponding continuous versions.
Finally, Section \ref{s:Berwald} is mainly devoted to the proof of Theorem \ref{t:Berwald}.

\section{Discrete projection-section inequalities}\label{s:RS-projsect}

We start this section by recalling some notions and fixing some notation that will be needed throughout the rest of the manuscript. We shall work in the $n$-dimensional Euclidean space $\R^n$, with origin $0$ (or $0_n$ if the distinction of the dimension is useful), endowed with the standard inner product
$\esc{\cdot,\cdot}$, and we will write $\e_i$ to represent the $i$-th canonical unit vector.
Given a non-empty set $M\subset\R^n$, let $\conv M$ and $\lin M$ denote, respectively, the convex and linear hulls of $M$, i.e., the smallest convex set and vector subspace, respectively, containing the set $M$.
Moreover, $\chi_{_M}$ will represent the characteristic function of $M$.
We will also write
\[\hyp(f)=\bigl\{(x,t): \, x\in M, \, t\in\R, \, f(x)\geq t \bigr\}\subset\R^{n+1}\]
for the \emph{hypograph} of a non-negative function $f:M\longrightarrow\R_{\geq0}$.

For the vector subspace $H=\lin\{\e_1,\dots,\e_k\}\in\L^n_{k}$, $k\in\{1,\dots,n-1\}$ we denote by
\[\Gsub{k}(M)=\bigl|M\cap\bigl(x+(\Z^{k}\times\{0_{n-k}\})\bigr)\bigr|\]
for any $M\subset x+H$ for some $x\in H^\perp$. Analogously,
we write \[\Gsub{n-k}(M)=\bigl|M\cap\bigl(y+(\{0_{k}\}\times\Z^{n-k})\bigr)\bigr|\]
for each $M\subset y+H^\perp$, for $y\in H$.
Furthermore, for the sake of simplicity, we will write $\Cube{H}:=(-1,1)^n\cap H$.

Finally, as usual in the literature, we will use the following conventional notation:
\[\binom{r}{s}:=\frac{\Gamma(r+1)}{\Gamma(s+1)\Gamma(r-s+1)}\]
for any $r,s>0$, where $\Gamma(\cdot)$ denotes the \emph{Gamma function}.

\medskip

Now we prove Theorem \ref{t:RS_sect_proj}, under the mild assumption that $K$ contains the origin, by showing the following more general result. In Remark~\ref{r:new proof Theor projsect} we will give an alternative proof of Theorem~ \ref{t:RS_sect_proj}, also valid for the case of any non-empty convex bounded set $K$ (not necessarily containing the origin).

\begin{theorem}\label{t:RS_sect_proj_refined}
Let $k\in\{1,\dots,n-1\}$ and $H=\lin\{\e_1,\dots,\e_k\}\in\L^n_{k}$.
Let $K\subset\R^n$ be a convex bounded set containing the origin. Then
\begin{equation}\label{e:RS_sect_proj_refined}
\left[\sum_{i=0}^{n-k}\frac{k}{n-i}\binom{n-k}{i}\binom{n}{i}^{-1}
\Gsub{n-k}(P_{H^\perp} K)^{i/(n-k)}\right]\Gsub{k}(K\cap H)
\leq\Gsub{n}\bigl(K+(-1,1)^n\bigr).
\end{equation}
\end{theorem}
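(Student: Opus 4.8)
The plan is to prove the stronger inequality \eqref{e:RS_sect_proj_refined} by a slicing argument that mimics the continuous projection-section proof of Rogers and Shephard, but carried out discretely so that the extra cube $(-1,1)^n$ appears naturally. Writing $\pi=P_{H^\perp}$ for the projection onto $H^\perp$ and identifying $H$ with $\R^k$ and $H^\perp$ with $\R^{n-k}$, I would slice $K+(-1,1)^n$ by the fibers over integer points $y\in\pi(K)\cap\Z^{n-k}$. For each such $y$, the fiber $\bigl(K+(-1,1)^n\bigr)\cap\bigl(y+H\bigr)$ is a convex set in a translate of $H$, and the key is to bound $\Gsub{k}$ of this $k$-dimensional fiber from below by a quantity involving $\Gsub{k}(K\cap H)$ and the position of $y$ within $\pi(K)$. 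The cube's contribution in the $H$-directions, namely $\Cube{H}=(-1,1)^k$, is exactly what guarantees each integer fiber contains at least a translate of $(K\cap H)\cap\Z^k$, after suitable convexity comparison.

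First I would set up the discrete analogue of the one-dimensional Berwald/Rogers-Shephard moment identity: define the function $g:\pi(K)\cap\Z^{n-k}\to\Z_{\geq0}$ by $g(y)=\Gsub{k}\bigl((K+(-1,1)^n)\cap(y+H)\bigr)$ and observe that summing $g$ over all integer $y$ gives precisely $\Gsub{n}\bigl(K+(-1,1)^n\bigr)$. The next step is to extract, by concavity of the section function $x\mapsto\vol_k(K\cap(x+H))^{1/k}$ (or rather its discrete shadow), a pointwise lower bound of the form $g(y)\geq\Gsub{k}(K\cap H)\cdot h(y)$, where $h$ is a normalized concave profile on $\pi(K)$ vanishing on the boundary and equal to $1$ where the central fiber sits. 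The precise combinatorial weights $\frac{k}{n-i}\binom{n-k}{i}\binom{n}{i}^{-1}$ on the left-hand side of \eqref{e:RS_sect_proj_refined} strongly suggest that one should expand a product over the $n-k$ projection directions and integrate (sum) a Beta-type identity; I would therefore look for a telescoping of the form $\sum_{y}h(y)\geq\sum_{i=0}^{n-k}\frac{k}{n-i}\binom{n-k}{i}\binom{n}{i}^{-1}\Gsub{n-k}(\pi K)^{i/(n-k)}$, matching the coefficients to the lattice-point count of a discrete simplex scaled by $\Gsub{n-k}(\pi K)^{1/(n-k)}$.

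The main obstacle, and the step demanding the most care, is the passage from the volume-based concavity (where the clean identity $\int_0^1(1-t)^{k-1}\,\dlat t=1/k$ drives the Rogers-Shephard constant) to its genuinely discrete counterpart: lattice-point counts of convex sections are not themselves concave in the fiber parameter, so I cannot directly invoke concavity of $g$. My strategy to overcome this is to replace $g$ by the comparison with the continuous section function of the \emph{inflated} body $K+(-1,1)^n$, using the elementary containment $\bigl(M\cap(y+H)\bigr)+\Cube{H}\subset\bigl(M+(-1,1)^n\bigr)\cap\bigl(y+H\bigr)$ together with the standard bound $\Gsub{k}(C)\geq\vol_k(C)$ valid for any convex $C$ whose $H$-extent exceeds the integer gaps in each coordinate (here guaranteed by the open cube $\Cube{H}$ of edge $2$). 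This converts the discrete sum over fibers into a genuine integral over $\pi(K)$ to which the continuous Berwald inequality \eqref{e:Berwald_classical}, or directly the argument behind \eqref{e:RS_sect_proj_vol}, may be applied; the combinatorial coefficients then emerge from the Beta-function normalization $\binom{n}{i}^{-1}=i!\,(n-i)!/n!$ after re-indexing the power-mean expansion.

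Finally, Theorem~\ref{t:RS_sect_proj} itself follows from \eqref{e:RS_sect_proj_refined} by discarding all but the top term $i=n-k$ in the bracketed sum, since that term equals $\frac{k}{k}\binom{n-k}{n-k}\binom{n}{n-k}^{-1}\Gsub{n-k}(\pi K)=\binom{n}{k}^{-1}\Gsub{n-k}(P_{H^\perp}K)$, and rearranging gives exactly \eqref{e:RS_sect_proj}. The assumption $0\in K$ is used only to anchor the central fiber realizing $\Gsub{k}(K\cap H)$; the promised Remark~\ref{r:new proof Theor projsect} removes it by a translation argument, so I would defer the general case to that remark rather than complicate the present proof.
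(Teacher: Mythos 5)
Your proposal has the right skeleton (fiber decomposition over $H$, a layer-cake/Beta-function mechanism producing the coefficients $\frac{k}{n-i}\binom{n-k}{i}\binom{n}{i}^{-1}$, and the correct observation that keeping only the $i=n-k$ term recovers Theorem~\ref{t:RS_sect_proj}), but the central step is both left unproved and, as described, routed through a method that cannot yield the refined inequality. You acknowledge that the discrete fiber count $g(y)=\Gsub{k}\bigl((K+(-1,1)^n)\cap(y+H)\bigr)$ is not concave in $y$ and propose to fix this by comparing with continuous section volumes of the inflated body and then invoking the continuous Berwald inequality \eqref{e:Berwald_classical} or the argument behind \eqref{e:RS_sect_proj_vol}. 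This cannot work for \eqref{e:RS_sect_proj_refined}: every term with $i<n-k$ in the bracketed sum exists only because $\Gsub{n-k}(\{0_{n-k}\})=1$, whereas the corresponding continuous quantity $\vol_{n-k}(\{\text{point}\})$ is $0$; any detour through volumes and the continuous Rogers--Shephard/Berwald machinery collapses the sum to its top term and at best reproves the weaker Theorem~\ref{t:RS_sect_proj} with constant $\binom{n}{k}$ (the paper makes exactly this point in Remark~\ref{r:new proof Theor projsect}). Moreover, chaining $\Gsub{k}(C)\geq\vol_k(\cdot)$ in one direction and then needing to return to the lattice counts $\Gsub{n-k}(P_{H^\perp}K)$ and $\Gsub{k}(K\cap H)$ on the left-hand side forces reverse comparisons that cost further cube inflations the statement does not allow.

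The missing idea is to stay entirely discrete and use the discrete Brunn--Minkowski inequality \eqref{e: BM_lattice_point_no_G(K)G(L)>0} twice. First, within the fibers: for $x\in P_{H^\perp}K+\Cube{H^\perp}$ and $y\in\Cube{H^\perp}$ one gets, for the point $(1-\lambda)x+\lambda y$, the lower bound $\lambda\,\Gsub{k}(K\cap H)^{1/k}$ on the $k$-th root of the fiber count (note this is a bound on $g^{1/k}$, not on $g$ against a concave profile as you wrote); this shows that the superlevel set $\d_s$ of the fiber-count function contains $(1-\lambda_s)P_{H^\perp}K+\Cube{H^\perp}$ with $\lambda_s=\bigl(s/\Gsub{k}(K\cap H)\bigr)^{1/k}$. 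Second, in the projection direction: applying \eqref{e: BM_lattice_point_no_G(K)G(L)>0} to this inclusion gives $\Gsub{n-k}(\d_s)\geq\bigl((1-\lambda_s)\Gsub{n-k}(P_{H^\perp}K)^{1/(n-k)}+\lambda_s\bigr)^{n-k}$, whose binomial expansion, integrated in $s$ over $[0,\Gsub{k}(K\cap H)]$ via the Beta identity, produces exactly the left-hand side of \eqref{e:RS_sect_proj_refined}, while the layer-cake identity $\int_0^{s_0}\Gsub{n-k}(\d_s)\,\dlat s\leq\sum_x g(x)=\Gsub{n}\bigl(K+(-1,1)^n\bigr)$ gives the right-hand side. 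Your "telescoping" guess is precisely this second application of discrete Brunn--Minkowski, but stating that you "would look for" it is not a proof, and the continuous detour you offer in its place forecloses it.
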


\smallskip

In particular, taking only the terms corresponding to $i=0$ and $i=n-k$, we obtain
\begin{equation*}\label{e:RS_sect_proj_refined2}
\left[\binom{n-1}{k-1}+\Gsub{n-k}(P_{H^\perp} K)\right]\Gsub{k}(K\cap H)
\leq\binom{n}{k}\Gsub{n}\bigl(K+(-1,1)^n\bigr).
\end{equation*}

\begin{proof}
First, for any $r\geq0$, we define the superlevel set
\begin{equation*}
\begin{split}
\d_r&=\Bigl\{x\in P_{H^\perp}\bigl(K+(-1,1)^n\bigr): \, \Gsub{k}\Bigl(\bigl(K+(-1,1)^n\bigr)\cap(x+H)\Bigr)\geq r\Bigr\}\\
&=\Bigl\{x\in P_{H^\perp}K+\Cube{H^\perp}: \, \Gsub{k}\Bigl(\bigl((K+\Cube{H^\perp})\cap(x+H)\bigr)+\Cube{H}\Bigr)\geq r\Bigr\}.
\end{split}
\end{equation*}
Now, let $x\in \d_0$, $y\in\Cube{H^\perp}$ and $\lambda\in[0,1]$. So, from the convexity of $K$, we have
\begin{equation}\label{e:ineq_Gk_in_proof_of_RS_sect_proj}
\begin{split}
\Gsub{k}&\Bigr(\bigl((K+\Cube{H^\perp})\cap\bigl((1-\lambda) x+\lambda y+H\bigr)\bigr)+\Cube{H}\Bigr)^{1/k}\\
&\geq \Gsub{k}\Bigl((1-\lambda)\bigl((K+\Cube{H^\perp})\cap (x+H)\bigr)
+\lambda\bigl((K+\Cube{H^\perp})\cap(y+ H)\bigr)+\Cube{H}\Bigr)^{1/k}.
\end{split}
\end{equation}
Notice that, since $x\in\d_0=P_{H^\perp} K+\Cube{H^\perp}$ and $y\in\Cube{H^\perp}\subset
P_{H^\perp} K+\Cube{H^\perp}$ (because $0\in K$), the sets
$(K+\Cube{H^\perp})\cap (x+H), (K+\Cube{H^\perp})\cap (y+H)$ are non-empty
and then the above sum
\[(1-\lambda)\bigl((K+\Cube{H^\perp})\cap (x+H)\bigr)
+\lambda\bigl((K+\Cube{H^\perp})\cap(y+ H)\bigr)+\Cube{H}\]
is well-defined.
Hence, from \eqref{e: BM_lattice_point_no_G(K)G(L)>0} we get
\begin{equation}\label{e:BM_in_proof_of_RS_sect_proj}
\begin{split}
\Gsub{k}&\Bigl((1-\lambda)\bigl((K+\Cube{H^\perp})\cap (x+H)\bigr)
+\lambda\bigl((K+\Cube{H^\perp})\cap(y+ H)\bigr)+\Cube{H}\Bigr)^{1/k}\\
&\geq (1-\lambda)\Gsub{k}\bigl((K+\Cube{H^\perp})\cap(x+H)\bigr)^{1/k}
+\lambda\Gsub{k}\bigl((K+\Cube{H^\perp})\cap(y+H)\bigr)^{1/k}\\
&\geq \lambda\Gsub{k}(K\cap H)^{1/k},
\end{split}
\end{equation}
where in the last inequality we have used that
\[\Gsub{k}\bigl((K+\Cube{H^\perp})\cap(y+H)\bigr)
\geq \Gsub{k}\bigl((y+K)\cap(y+H)\bigr)
=\Gsub{k}(K\cap H)\]
for every $y\in\Cube{H^\perp}$. Thus, setting
\begin{equation*}
\lambda_s=\left(\frac{s}{\Gsub{k}(K\cap H)}\right)^{1/k}
\end{equation*}
for any $0\leq s\leq \Gsub{k}(K\cap H)$ (observe that $\Gsub{k}(K\cap H)\neq0$ since $0\in K$),
from \eqref{e:ineq_Gk_in_proof_of_RS_sect_proj} and \eqref{e:BM_in_proof_of_RS_sect_proj}
for $\lambda=\lambda_s$ we conclude that
\[
(1-\lambda_s)\bigl(P_{H^\perp} K+\Cube{H^\perp}\bigr)
+ \lambda_s \Cube{H^\perp}\subset \d_s.
\]
In other words, for any $0\leq s\leq \Gsub{k}(K\cap H)$ we have
\[
(1-\lambda_s)P_{H^\perp} K+\Cube{H^\perp}\subset \d_s
\]
and then, by \eqref{e: BM_lattice_point_no_G(K)G(L)>0}, we get
\[
(1-\lambda_s)\Gsub{n-k}(P_{H^\perp} K)^{1/(n-k)}
+\lambda_s\Gsub{n-k}(\{0_{n-k}\})^{1/(n-k)}\leq \Gsub{n-k}(\d_s)^{1/(n-k)}
\]
and consequently
\begin{equation}\label{e:binom_lambda_s}
\sum_{i=0}^{n-k}\binom{n-k}{i}(1-\lambda_s)^{i}\lambda_s^{n-k-i}\Gsub{n-k}(P_{H^\perp} K)^{i/(n-k)}\leq \Gsub{n-k}(\d_s)
\end{equation}
for all $0\leq s\leq\Gsub{k}(K\cap H)$.

\smallskip

Now, on the one hand, we observe that
\begin{equation*}
\begin{split}
\int_0^{\Gsub{k}(K\cap H)}(1-\lambda_s)^{i}\lambda_s^{n-k-i}\,\dlat s
&=k\, \Gsub{k}(K\cap H)\int_0^1\theta^{n-i-1}(1-\theta)^{i}\,\dlat \theta\\
&=k\, \Gsub{k}(K\cap H)\frac{\Gamma(n-i)\Gamma(i+1)}{\Gamma(n+1)}\\
&=\frac{k}{n-i}\binom{n}{i}^{-1}\Gsub{k}(K\cap H)
\end{split}
\end{equation*}
and hence, integration on $s\in[0,\Gsub{k}(K\cap H)]$ on the left-hand side of \eqref{e:binom_lambda_s} yields
\[
\left[\sum_{i=0}^{n-k}\frac{k}{n-i}\binom{n-k}{i}\binom{n}{i}^{-1}
\Gsub{n-k}(P_{H^\perp} K)^{i/(n-k)}\right]\Gsub{k}(K\cap H).
\]
On the other hand, we have
\begin{equation*}
\begin{split}
&\int_0^{\Gsub{k}(K\cap H)} \Gsub{n-k}(\d_s)\,\dlat s\\
&\qquad=\int_0^{\Gsub{k}(K\cap H)} \sum_{x\in(P_{H^\perp}K+\Cube{H^\perp})\cap\Z^n}
\chi_{_{\d_s}}(x)\,\dlat s\\
&\qquad=\sum_{x\in(P_{H^\perp}K+\Cube{H^\perp})\cap\Z^n}
\min\left\{\Gsub{k}\Bigl(\bigl((K+\Cube{H^\perp})\cap(x+H)\bigr)+\Cube{H}\Bigr),
\Gsub{k}(K\cap H)\right\}\\
&\qquad\leq\sum_{x\in(P_{H^\perp}K+\Cube{H^\perp})\cap\Z^n}
\Gsub{k}\Bigl(\bigl((K+\Cube{H^\perp})\cap(x+H)\bigr)+\Cube{H}\Bigr)\\
&\qquad=\sum_{x\in (P_{H^\perp}K+\Cube{H^\perp})\cap\Z^n}\Gsub{k}\Bigl(\bigl(K+(-1,1)^n\bigr)\cap(x+H)\Bigr)\\
&\qquad=\Gsub{n}\bigl(K+(-1,1)^n\bigr).\\
\end{split}
\end{equation*}
This concludes the proof.
\end{proof}

\begin{remark}\label{r:H for proj-sect}
The role of $H=\lin\{\e_1,\dots,\e_k\}$ in the above result can be played by any other $k$-dimensional coordinate
(vector) subspace.
\end{remark}

Next we derive a first discrete analogue of the Rogers-Shephard inequality \eqref{e:RS_K_L_vol}, by considering a suitable $(2n)$-dimensional convex bounded set and applying Theorem \ref{t:RS_sect_proj}, following the original idea of Rogers and Shephard in \cite{RS2}:
\begin{corollary}\label{c:RS}
Let $K,L\subset\R^n$ be convex bounded sets containing the origin. Then
\begin{equation}\label{e:RS_K_L_weak}
\Gsub{n}(K+L)\Gsub{n}\bigl(K\cap(-L)\bigr)\leq \binom{2n}{n}\Gsub{n}\bigl(K+(-1,1)^n\bigr)
\Gsub{n}\bigl(L+(-2,2)^n\bigr).
\end{equation}
In particular, taking $L=-K$,
\begin{equation}\label{e:RS_K-K_weak}
\Gsub{n}(K-K)\leq\binom{2n}{n}\Gsub{n}\bigl(K+(-1,1)^n\bigr)
\frac{\Gsub{n}\bigl(K+(-2,2)^n\bigr)}{\Gsub{n}(K)}.
\end{equation}
\end{corollary}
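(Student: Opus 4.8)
The plan is to mimic the classical Rogers--Shephard argument, realizing the product $\Gsub{n}(K+L)\Gsub{n}(K\cap(-L))$ as the left-hand side of the discrete projection--section inequality (Theorem \ref{t:RS_sect_proj}) applied to a cleverly chosen body in $\R^{2n}$. First I would work in $\R^{2n}=\R^n\times\R^n$ and set
\[
C=\bigl\{(x,y)\in\R^n\times\R^n:\, x\in K,\ y-x\in L\bigr\},
\]
which is the image of $K\times L$ under the unimodular linear map $(x,w)\mapsto(x,x+w)$; hence $C$ is a non-empty (since $0\in K$ and $0\in L$) convex bounded set. Taking $k=n$ and $H=\lin\{\e_1,\dots,\e_n\}\in\L^{2n}_n$, so that $H^\perp=\lin\{\e_{n+1},\dots,\e_{2n}\}$, I would identify the two factors on the left-hand side of \eqref{e:RS_sect_proj}: the projection onto the last $n$ coordinates satisfies $P_{H^\perp}C=K+L$, giving $\Gsub{n}(P_{H^\perp}C)=\Gsub{n}(K+L)$, while the section is $C\cap H=(K\cap(-L))\times\{0_n\}$, whence $\Gsub{n}(C\cap H)=\Gsub{n}(K\cap(-L))$.

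With these identifications, Theorem \ref{t:RS_sect_proj} in dimension $2n$ (with $k=n$) immediately yields
\[
\Gsub{n}(K+L)\,\Gsub{n}\bigl(K\cap(-L)\bigr)\le\binom{2n}{n}\,\Gsub{2n}\bigl(C+(-1,1)^{2n}\bigr),
\]
so it only remains to bound the right-hand count by the desired product, which is the crux of the argument. I would use that the integer linear map $\Phi(p,q)=(p,q-p)$ is a bijection of $\Z^{2n}$ and show it carries the lattice points of $C+(-1,1)^{2n}$ into a product set: if $(p,q)\in\Z^{2n}$ lies in $C+(-1,1)^{2n}$, write $p=x+u$ and $q=x+w+v$ with $x\in K$, $w\in L$ and $u,v\in(-1,1)^n$. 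Then $p\in(K+(-1,1)^n)\cap\Z^n$, while $q-p=w+(v-u)$ with $v-u\in(-2,2)^n$ forces $q-p\in(L+(-2,2)^n)\cap\Z^n$. Hence $\Phi$ embeds $(C+(-1,1)^{2n})\cap\Z^{2n}$ injectively into $\bigl[(K+(-1,1)^n)\cap\Z^n\bigr]\times\bigl[(L+(-2,2)^n)\cap\Z^n\bigr]$, which gives $\Gsub{2n}(C+(-1,1)^{2n})\le\Gsub{n}(K+(-1,1)^n)\,\Gsub{n}(L+(-2,2)^n)$ and therefore \eqref{e:RS_K_L_weak}.

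The main obstacle is precisely this last count, and in particular seeing that the thickening surviving the change of variables $q\mapsto q-p$ must be $(-2,2)^n$ rather than $(-1,1)^n$: the difference $v-u$ of two independent vectors of $(-1,1)^n$ ranges over the doubled cube, which is the source of the weaker factor in \eqref{e:RS_K_L_weak}. Finally, to obtain \eqref{e:RS_K-K_weak} I would specialize $L=-K$: then $K\cap(-L)=K$ and $K+L=K-K$, while $\Gsub{n}(L+(-2,2)^n)=\Gsub{n}\bigl(-(K+(-2,2)^n)\bigr)=\Gsub{n}(K+(-2,2)^n)$, since both $(-2,2)^n$ and $\Z^n$ are symmetric about the origin; rearranging yields the stated bound.
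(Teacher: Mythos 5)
Your proof is correct and follows essentially the same route as the paper: the auxiliary body $C$ is exactly the paper's $F=\{(x,y):x\in K,\ x-y\in-L\}$, the projection and section are identified identically, and your counting of $(C+(-1,1)^{2n})\cap\Z^{2n}$ via the unimodular shear $(p,q)\mapsto(p,q-p)$ is just a reformulation of the paper's fiber-by-fiber estimate $\Gsub{2n}(F+(-1,1)^{2n})\le\sum_{x}\Gsub{n}(x+L+(-2,2)^n)$, with the same source of the doubled cube $(-2,2)^n$. No gaps.
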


\begin{proof}
Consider the $(2n)$-dimensional convex bounded set containing the origin defined by
\[
F=\bigl\{(x,y)\in\R^{2n}: \, x\in K, \, x-y\in-L\bigr\}
\]
and let $H=\lin\{\e_1,\dots,\e_n\}\in\L_n^{2n}$. Notice that $P_{H^\perp} F$ is the set of points $(0,y)$ such that $(x,y)\in F$ for some $x\in\R^n$, which is equivalent to the fact that $y\in x+L$ for some $x\in K$, and hence we get $P_{H^\perp} F=\{0_{n}\}\times (K+L)$. Moreover, we clearly have that \[F\cap H= \bigl(K\cap (-L)\bigr)\times\{0_{n}\}.\]
Now, given $(x,y)\in F+(-1,1)^{2n}$, we have that $x\in x_1+(-1,1)^n$ for some $x_1\in K$ and that $y\in y_1+(-1,1)^n$ for some $y_1\in x_1+L\subset x+(-1,1)^n+L$. So, for every $(x,y)\in F+(-1,1)^{2n}$, $x\in K+(-1,1)^n$ and $y\in x+L+(-2,2)^n$. Thus,
\begin{equation*}
\begin{split}
\Gsub{2n}\bigl(F+(-1,1)^{2n}\bigr)&\leq\sum_{x\in (K+(-1,1)^n)\cap\Z^n}\Gsub{n}\bigl(x+L+(-2,2)^n\bigr)\\
&=\Gsub{n}\bigl(K+(-1,1)^n\bigr)\Gsub{n}\bigl(L+(-2,2)^n\bigr).
\end{split}
\end{equation*}
Therefore, from Theorem \ref{t:RS_sect_proj} (applied to the convex bounded set $F$ containing the origin
and the vector subspace $H$) we obtain
\[
\Gsub{n}(K+L)\Gsub{n}\bigl(K\cap(-L)\bigr)\leq \binom{2n}{n}\Gsub{n}\bigl(K+(-1,1)^n\bigr)
\Gsub{n}\bigl(L+(-2,2)^n\bigr),
\]
which clearly further implies \eqref{e:RS_K-K_weak}. This concludes the proof.
\end{proof}

As we will see in the forthcoming section, we will derive other discrete Rogers-Shephard type inequalities, which are actually stronger than those (collected in Corollary \ref{c:RS}) that may be obtained as a consequence of applying the discrete projection-section inequality \eqref{e:RS_sect_proj}, in contrast to what happens in the continuous setting.

\section{Discrete Rogers-Shephard type inequalities}\label{s:RS}

When dealing with the (proof of the) Rogers-Shephard inequality \eqref{e:RS_K-K_vol}, one is naturally led to the notion of the \emph{covariogram} of a convex body $K\in\K^n$, that is, the function $f:\R^n\longrightarrow\R_{\geq0}$ given by
\[f(x)=\vol\bigl(K\cap(x+K)\bigr).\]
Its discrete version for finite sets $A\subset\R^n$, $\bigl|A\cap(x+A)\bigr|$, has been studied in \cite{GGZ}, where the authors show elegant relations of the latter with the continuous version. Here, first we will consider the following slight modification of the corresponding discrete version (for the lattice point enumerator) of the covariogram of $K$:
\[x\mapsto\Gsub{n}\Bigl(\bigl(K+(-1,1)^n\bigr)\cap\bigl(x+K+(-1,1)^n\bigr)\Bigr).\]
By using this, and exploiting the classical proof of the Rogers-Shephard inequality \eqref{e:RS_K-K_vol} that is based on the covariogram, we get to the following result. Again, we will present it in the more general setting of two convex bounded sets $K,L\subset\R^n$.

\begin{theorem}\label{t:RS_diff_refined}
Let $K,L\subset\R^n$ be convex bounded sets containing the origin, and such that $(-1,1)^n\subset L$.
Then
\begin{equation}\label{e:RS_diff_K_L_refined}
\begin{split}
\left[\sum_{i=0}^{n}\frac{n}{2n-i}\binom{n}{i}\binom{2n}{i}^{-1}
\Gsub{n}\bigl((K+L)\sim(-1,1)^n\bigr)^{i/n}\right]
\Gsub{n}\Bigl(K\cap\bigl((-L)\sim(-1,1)^n\bigr)\Bigr)\\
\leq\Gsub{n}\bigl(K+(-1,1)^n\bigr)\Gsub{n}\bigl(L+(-1,1)^n\bigr).
\end{split}
\end{equation}
In particular, taking $L=-K$ (for a convex set $K\subset\R^n$ with $(-1,1)^n\subset K$),
\begin{equation}\label{e:RS_diff_K-K_refined}
\sum_{i=0}^{n}\frac{n}{2n-i}\binom{n}{i}\binom{2n}{i}^{-1}
\Gsub{n}\bigl((K-K)\sim(-1,1)^n\bigr)^{i/n}
\leq\frac{\Gsub{n}\bigl(K+(-1,1)^n\bigr)^2}
{\Gsub{n}\bigl(K\sim(-1,1)^n\bigr)}.
\end{equation}
\end{theorem}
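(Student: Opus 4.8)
The plan is to imitate the classical covariogram proof of the Rogers--Shephard inequality, replacing the covariogram of $K$ by a discrete, cube-corrected analogue and replacing the Brunn--Minkowski inequality by its discrete counterpart \eqref{e: BM_lattice_point_no_G(K)G(L)>0}. Writing $C:=(-1,1)^n$, I would introduce the discrete covariogram
\[
g(x)=\Gsub{n}\bigl((K+C)\cap(x-L+C)\bigr),\qquad x\in\R^n,
\]
and first record the exact identity $\sum_{x\in\Z^n}g(x)=\Gsub{n}(K+C)\,\Gsub{n}(L+C)$, which is the right-hand side of \eqref{e:RS_diff_K_L_refined}. This follows from a discrete Fubini argument: $g(x)=\sum_{z\in\Z^n}\chi_{_{K+C}}(z)\,\chi_{_{L+C}}(x-z)$ (using $-C=C$), and summing first over $x$ gives $\Gsub{n}(L+C)$ for each fixed $z$. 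The layer-cake formula then turns the target into a lower bound for superlevel sets, $\sum_{x}g(x)=\int_0^\infty \Gsub{n}(\d_t)\,\dlat t$ with $\d_t=\{x:g(x)\ge t\}$, and the whole problem reduces to bounding $\Gsub{n}(\d_t)$ from below for $t$ up to the value $V:=\Gsub{n}\bigl(K\cap((-L)\sim C)\bigr)$.

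The cube bookkeeping is the delicate point, and to handle it I would pass to the auxiliary function $h(x)=\Gsub{n}\bigl((K\cap(x-L))+C\bigr)$, which satisfies $h\le g$ pointwise (because $(K\cap(x-L))+C\subseteq(K+C)\cap(x-L+C)$) and, crucially, has the cube placed \emph{after} the intersection, so that \eqref{e: BM_lattice_point_no_G(K)G(L)>0} applies to it directly. The core step is a discrete concavity estimate: for $x_1\in K+L$, $x_0\in C$ and $\lambda\in(0,1)$, I claim $h\bigl((1-\lambda)x_1+\lambda x_0\bigr)\ge \lambda^n V$. To prove it I would use convexity of $K$ and $L$ to get $K\cap(x-L)\supseteq(1-\lambda)\bigl(K\cap(x_1-L)\bigr)+\lambda\bigl(K\cap(x_0-L)\bigr)$ for $x=(1-\lambda)x_1+\lambda x_0$, add $C$, and apply the discrete Brunn--Minkowski inequality; dropping the $x_1$-term and using that $K\cap(x_1-L)$ is nonempty (as $x_1\in K+L$), this leaves $\lambda\,\Gsub{n}\bigl(K\cap(x_0-L)\bigr)^{1/n}$. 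Here the hypothesis $(-1,1)^n\subseteq L$ enters: it gives $-z+C\subseteq L$ for $z\in K\cap((-L)\sim C)$, whence $x_0-z\in L$ for every $x_0\in C$, so $K\cap(x_0-L)\supseteq K\cap((-L)\sim C)$ and $\Gsub{n}\bigl(K\cap(x_0-L)\bigr)\ge V$.

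From this I would read off the superlevel containment. Given $q\in(K+L)\sim C$ and $c\in C$, writing $(1-\lambda)q+c=(1-\lambda)(q+c)+\lambda c$ with $q+c\in K+L$ and $c\in C$ shows $h\bigl((1-\lambda)q+c\bigr)\ge\lambda^n V$; hence, using $g\ge h$, $\d_{\lambda^n V}\supseteq(1-\lambda)\bigl((K+L)\sim C\bigr)+C$. Applying \eqref{e: BM_lattice_point_no_G(K)G(L)>0} once more, to $(1-\lambda)((K+L)\sim C)+\lambda\{0\}+C$, gives $\Gsub{n}(\d_{\lambda^n V})\ge\bigl((1-\lambda)S^{1/n}+\lambda\bigr)^n$ with $S:=\Gsub{n}((K+L)\sim C)$. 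Substituting $t=\lambda^nV$ in $\int_0^V\Gsub{n}(\d_t)\,\dlat t$, expanding the binomial, and evaluating the Beta integrals $\int_0^1(1-\theta)^i\theta^{2n-i-1}\,\dlat\theta=\Gamma(i+1)\Gamma(2n-i)/\Gamma(2n+1)$ (exactly the computation used in the proof of Theorem~\ref{t:RS_sect_proj_refined}) reproduces the coefficients $\tfrac{n}{2n-i}\binom{n}{i}\binom{2n}{i}^{-1}$ and yields \eqref{e:RS_diff_K_L_refined}. The specialization \eqref{e:RS_diff_K-K_refined} then follows by taking $L=-K$, since $(-L)\sim C=K\sim C\subseteq K$ forces $V=\Gsub{n}(K\sim C)$ and $\Gsub{n}(L+C)=\Gsub{n}(K+C)$.

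I expect the main obstacle to be precisely this cube bookkeeping: arranging that one copy of $C$ is absorbed by the discrete Brunn--Minkowski inequality (which is why $h$, with the cube after the intersection, is the correct object) while the erosions $\sim(-1,1)^n$ together with the assumption $(-1,1)^n\subseteq L$ are used to keep the value $V$ intact at every pivot $x_0\in C$ and to collapse $(1-\lambda)C+\lambda C=C$ in the final containment. Once these inclusions are lined up, the remaining integral identity is routine, and the nonemptiness checks ($V\ge 1$, $S\ge 1$, and nonemptiness of the sections) follow immediately from $0\in K$ and $(-1,1)^n\subseteq L$.
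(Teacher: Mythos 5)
Your proposal is correct and follows essentially the same route as the paper's proof: the same discrete covariogram $x\mapsto\Gsub{n}\bigl((K+(-1,1)^n)\cap(x-L+(-1,1)^n)\bigr)$, the same two applications of \eqref{e: BM_lattice_point_no_G(K)G(L)>0} (once to the intersections plus the cube, once to the eroded set $(K+L)\sim(-1,1)^n$), the same use of $(-L)\sim(-1,1)^n$ to get a uniform lower bound at the pivots in $(-1,1)^n$, and the same Beta-integral computation; your auxiliary $h$ and the layer-cake identity are just explicit namings of steps the paper performs inline. The only cosmetic difference is that you note the hypothesis $(-1,1)^n\subset L$ as entering in the inclusion $K\cap(x_0-L)\supseteq K\cap\bigl((-L)\sim(-1,1)^n\bigr)$ (which actually holds unconditionally), whereas its real role — guaranteeing the nonemptiness of the relevant sets — is correctly covered by your closing remarks.
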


\smallskip

Before showing the result, we observe that taking only the terms corresponding to $i=0$ and $i=n$
in the above expressions we obtain
\begin{equation*}\label{e:RS_diff_K_L_refined2}
\begin{split}
\left[\frac{1}{2}\binom{2n}{n}+\Gsub{n}\bigl((K+L)\sim(-1,1)^n\bigr)\right]
\Gsub{n}\Bigl(K\cap\bigl((-L)\sim(-1,1)^n\bigr)\Bigr)\\
\leq\binom{2n}{n}\Gsub{n}\bigl(K+(-1,1)^n\bigr)
\Gsub{n}\bigl(L+(-1,1)^n\bigr)
\end{split}
\end{equation*}
and
\begin{equation*}\label{e:RS_diff_K-K_refined2}
\frac{1}{2}\binom{2n}{n}+\Gsub{n}\bigl((K-K)\sim(-1,1)^n\bigr)
\leq\binom{2n}{n}\frac{\Gsub{n}\bigl(K+(-1,1)^n\bigr)^2}
{\Gsub{n}\bigl(K\sim(-1,1)^n\bigr)},
\end{equation*}
respectively.

\smallskip

Moreover, taking only the term corresponding to $i=n$ applied to the sets $K$ and $L':=L+(-1,1)^n$ (and taking into account the relations between the Minkowski difference and addition; see~e.g.~\cite[Lemma~3.1.11]{Sch2}),
one recovers again the statement of Corollary \ref{c:RS}.


\begin{proof}
First, for any $r\geq0$, we consider the superlevel set
\[
\d_r=\Bigl\{x\in(K+L)+(-2,2)^n:\,\Gsub{n}\Bigl(\bigl(K+(-1,1)^n\bigr)\cap\bigl(x-L+(-1,1)^n\bigr)\Bigr)\geq r\Bigr\}.
\]
Now, let $x\in K+L$, $y\in(-1,1)^n$ and $\lambda\in[0,1]$. Then, from the convexity of $K$ and $L$, we have
\begin{equation}\label{e:ineq_Gn_in_proof_of_RS_covariogram}
\begin{split}
 \Gsub{n}&\Bigl(\bigl(K+(-1,1)^n\bigr)\cap\bigl((1-\lambda) x+\lambda y-L+(-1,1)^n\bigr)\Bigr)^{1/n}\\
&\geq \Gsub{n}\Bigl((1-\lambda)\bigl(K\cap(x-L)\bigr)+\lambda\bigl(K\cap(y-L)\bigr)+(-1,1)^n\Bigr)^{1/n}
\end{split}
\end{equation}
Notice that, since $x\in K+L$ and $y\in(-1,1)^n\subset L\subset K+L$ (because $0\in K$),
the sets $K\cap(x-L), K\cap(y-L)$ are non-empty
and then the above sum
\[(1-\lambda)\bigl(K\cap(x-L)\bigr)+\lambda\bigl(K\cap(y-L)\bigr)+(-1,1)^n\]
is well-defined.
Hence, from \eqref{e: BM_lattice_point_no_G(K)G(L)>0} we get
\begin{equation}\label{e:BM_in_proof_of_RS_covariogram}
\begin{split}
 \Gsub{n}&\Bigl((1-\lambda)\bigl(K\cap(x-L)\bigr)+\lambda\bigl(K\cap(y-L)\bigr)+(-1,1)^n\Bigr)^{1/n}\\
&\geq (1-\lambda) \Gsub{n}\bigl(K\cap(x-L)\bigr)^{1/n}+\lambda\Gsub{n}\bigl(K\cap(y-L)\bigr)^{1/n}\\
&\geq \lambda\Gsub{n}\Bigl(K\cap\bigl((-L)\sim(-1,1)^n\bigr)\Bigr)^{1/n},
\end{split}
\end{equation}
where in the last inequality we have used that
\[\Gsub{n}\bigl(K\cap(y-L)\bigr)
\geq \Gsub{n}\Bigl(K\cap\bigl((-L)\sim(-1,1)^n\bigr)\Bigr)\]
for every $y\in(-1,1)^n$.
Observing also that $\Gsub{n}\Bigl(K\cap\bigl((-L)\sim(-1,1)^n\bigr)\Bigr)\neq0$
since $0\in K\cap\bigl((-L)\sim(-1,1)^n\bigr)$, we may define
\[
\lambda_s=\left(\frac{s}{\Gsub{n}\Bigl(K\cap\bigl((-L)\sim(-1,1)^n\bigr)\Bigr)}\right)^{1/n}
\]
for any $0\leq s\leq \Gsub{n}\Bigl(K\cap\bigl((-L)\sim(-1,1)^n\bigr)\Bigr)$. Thus,
from \eqref{e:ineq_Gn_in_proof_of_RS_covariogram} and \eqref{e:BM_in_proof_of_RS_covariogram}
for $\lambda=\lambda_s$ we conclude that
\[
(1-\lambda_s)(K+L)
+ \lambda_s (-1,1)^n\subset \d_s.
\]
In particular, for any $0\leq s\leq \Gsub{n}\Bigl(K\cap\bigl((-L)\sim(-1,1)^n\bigr)\Bigr)$ we have

\[
\begin{split}
(1-\lambda_s)&\bigl((K+L)\sim(-1,1)^n\bigr)+(-1,1)^n \\
&= (1-\lambda_s)\Bigl(\bigl((K+L)\sim(-1,1)^n\bigr)+(-1,1)^n\Bigr)+\lambda_s(-1,1)^n\\
&\subset (1-\lambda_s)(K+L)+\lambda_s(-1,1)^n\subset\d_s.
\end{split}
\]
Then, by \eqref{e: BM_lattice_point_no_G(K)G(L)>0}, we get
\[
(1-\lambda_s)\Gsub{n}\bigl((K+L)\sim(-1,1)^n\bigr)^{1/n}
+\lambda_s\Gsub{n}(\{0\})^{1/n}\leq \Gsub{n}(\d_s)^{1/n}
\]
and consequently
\begin{equation}\label{e:binom_lambda_s_covariogram}
\sum_{i=0}^{n}\binom{n}{i}(1-\lambda_s)^{i}\lambda_s^{n-i}\Gsub{n}\bigl((K+L)\sim(-1,1)^n\bigr)^{i/n}
\leq \Gsub{n}(\d_s)
\end{equation}
for all $0\leq s\leq \Gsub{n}\Bigl(K\cap\bigl((-L)\sim(-1,1)^n\bigr)\Bigr)$.

\smallskip

Now, writing $s_0=\Gsub{n}\Bigl(K\cap\bigl((-L)\sim(-1,1)^n\bigr)\Bigr)$, on the one hand
we observe that
\begin{equation*}
\begin{split}
\int_0^{s_0}(1-\lambda_s)^{i}\lambda_s^{n-i}\,\dlat s
&=n\, \Gsub{n}\Bigl(K\cap\bigl((-L)\sim(-1,1)^n\bigr)\Bigr)\int_0^1\theta^{2n-i-1}(1-\theta)^{i}\,\dlat \theta\\
&=n\, \Gsub{n}\Bigl(K\cap\bigl((-L)\sim(-1,1)^n\bigr)\Bigr)\frac{\Gamma(2n-i)\Gamma(i+1)}{\Gamma(2n+1)}\\
&=\frac{n}{2n-i}\binom{2n}{i}^{-1}\Gsub{n}\Bigl(K\cap\bigl((-L)\sim(-1,1)^n\bigr)\Bigr)
\end{split}
\end{equation*}
and hence, integration on $s\in[0,s_0]$ on the left-hand side of \eqref{e:binom_lambda_s_covariogram} yields
\[
\left[\sum_{i=0}^{n}\frac{n}{2n-i}\binom{n}{i}\binom{2n}{i}^{-1}
\Gsub{n}\bigl((K+L)\sim(-1,1)^n\bigr)^{i/n}\right]\Gsub{n}\Bigl(K\cap\bigl((-L)\sim(-1,1)^n\bigr)\Bigr).
\]
On the other hand, we have
\begin{equation*}
\begin{split}
\int_0^{s_0} & \Gsub{n}(\d_s)\,\dlat s\\
&=\int_0^{s_0} \sum_{x\in(K+L+(-2,2)^n)\cap\Z^n}
\chi_{_{\d_s}}(x)\,\dlat s\\
&=\sum_{x\in(K+L+(-2,2)^n)\cap\Z^n}
\min\left\{\Gsub{n}\Bigl(\bigl(K+(-1,1)^n\bigr)\cap\bigl(x-L+(-1,1)^n\bigr)\Bigr),s_0\right\}\\
&\leq\sum_{x\in(K+L+(-2,2)^n)\cap\Z^n}
\Gsub{n}\Bigl(\bigl(K+(-1,1)^n\bigr)\cap\bigl(x-L+(-1,1)^n\bigr)\Bigr)\\
&=\sum_{x\in(K+L+(-2,2)^n)\cap\Z^n}\sum_{y\in\Z^n}\chi_{_{K+(-1,1)^n}}(y)\chi_{_{x-L+(-1,1)^n}}(y)\\
&=\sum_{x\in(K+L+(-2,2)^n)\cap\Z^n}\sum_{y\in\Z^n}\chi_{_{K+(-1,1)^n}}(y)\chi_{_{y+L+(-1,1)^n}}(x)\cr
&=\sum_{y\in\Z^n}\chi_{_{K+(-1,1)^n}}(y) \sum_{x\in(K+L+(-2,2)^n)\cap\Z^n}\chi_{_{y+L+(-1,1)^n}}(x)\cr
&=\Gsub{n}\bigl(K+(-1,1)^n\bigr)\Gsub{n}\bigl(L+(-1,1)^n\bigr).\\
\end{split}
\end{equation*}

\noindent This concludes the proof.
\end{proof}

\medskip

Next we will use a different approach to show a discrete analogue of the Rogers-Shephard inequality \eqref{e:RS_K-K_vol}. To this aim, let $K\subset\R^n$ be a non-empty convex bounded set. Then the following two relations involving the lattice point enumerator $\Gsub{n}(\cdot)$ and the volume $\vol(\cdot)$ hold:
\begin{equation}\label{e:relation_G<vol}
\Gsub{n}(K)\leq\vol\left(K+\left(-\frac{1}{2},\frac{1}{2}\right)^n\right)
\end{equation}
and
\begin{equation}\label{e:relation_vol<G}
\vol(K)\leq\Gsub{n}\left(K+\left(-\frac{1}{2},\frac{1}{2}\right)^n\right).
\end{equation}

Regarding \eqref{e:relation_G<vol} (cf.~e.g.~\cite[Equation~(3.3)]{GW}), notice that it can be easily deduced from the inclusion $K\cap\Z^n\subset K$ jointly with the fact that $\Gsub{n}(K)=\vol\bigl((K\cap\Z^n) + (-1/2,1/2)^n\bigr)$.

It is also easy to derive \eqref{e:relation_vol<G}
(although it is ``folklore'', we have not found a precise reference for it):
let $A=\bigl\{z\in\Z^n: \, \bigl(z+(-1/2,1/2)^n\bigr)\cap K\neq\emptyset\bigr\}$,
for which one clearly has
\[
A\subset\left(K+\left(-\frac{1}{2},\frac{1}{2}\right)^n\right)\cap\Z^n
\] and thus
\begin{equation}\label{e:proving_vol<G_1}
|A|\leq\Gsub{n}\left(K+\left(-\frac{1}{2},\frac{1}{2}\right)^n\right).
\end{equation}
Moreover, taking the null measure set $M:=\R^n\setminus\bigl(\Z^n+(-1/2,1/2)^n\bigr)$, we get
\[
K\subset M\cup \left[A+\left(-\frac{1}{2},\frac{1}{2}\right)^n\right]
\]
and hence
\begin{equation}\label{e:proving_vol<G_2}
\vol(K)\leq \vol\left(A+\left(-\frac{1}{2},\frac{1}{2}\right)^n\right)
=|A|.
\end{equation}
Thus, using \eqref{e:proving_vol<G_2} and \eqref{e:proving_vol<G_1}, \eqref{e:relation_vol<G} follows.

\medskip

Now, we are in a condition to show Theorems~\ref{t:RS_K-K} and \ref{t:RS_K_L}.
\begin{proof}[Proof of Theorem~\ref{t:RS_K-K}]
Using \eqref{e:relation_G<vol}
jointly with the classical Rogers-Shephard inequality \eqref{e:RS_K-K_vol}
(for which the assumption on the convex bounded set $K$ to be closed is actually not
necessary, due to the facts that the boundary of a convex set has null measure and the closure of the Minkowski
sum of bounded sets is the Minkowski sum of the closure of them)
and \eqref{e:relation_vol<G}, we get
\begin{equation*}
\begin{split}
\Gsub{n}(K-K)
&\leq \vol\left(K-K+\left(-\frac{1}{2},\frac{1}{2}\right)^n\right)\\
& = \vol\left(K+\left(-\frac{1}{4},\frac{1}{4}\right)^n-\left[K+\left(-\frac{1}{4},\frac{1}{4}\right)^n\right]\right)\\
&\leq\binom{2n}{n} \vol\left(K+\left(-\frac{1}{4},\frac{1}{4}\right)^n\right)
\leq\binom{2n}{n} \Gsub{n}\left(K+\left(-\frac{3}{4},\frac{3}{4}\right)^n\right).
\end{split}
\end{equation*}
This concludes the proof.
\end{proof}

\begin{proof}[Proof of Theorem~\ref{t:RS_K_L}]
By \eqref{e:relation_G<vol} and the fact that, for any $A,B,C\subset\R^n$,
\begin{equation}\label{e:A,B,C_inters_sum}
(A\cap B) +C\subset (A+C)\cap(B+C)
\end{equation}
(see~e.g.~\cite[Equation~(3.2)]{Sch2}), we get
\begin{equation*}
\begin{split}
\Gsub{n}(&K+L)\Gsub{n}\bigl(K\cap(-L)\bigr)\\
&\leq \vol\left(K+L+\left(-\frac{1}{2},\frac{1}{2}\right)^n\right)
\vol\left(K\cap(-L)+\left(-\frac{1}{2},\frac{1}{2}\right)^n\right)\\
&\leq c_{_{K,L}}\,\vol\bigl(K+L+(-1,1)^n\bigr)
\vol\left(\left[K+\left(-\frac{1}{2},\frac{1}{2}\right)^n\right]\cap
\left[-L+\left(-\frac{1}{2},\frac{1}{2}\right)^n\right]\right).
\end{split}
\end{equation*}
Now, applying the classical Rogers-Shephard inequality \eqref{e:RS_K_L_vol} (again, the assumption on the convex sets $K,L$ to be closed is not needed) jointly with \eqref{e:relation_vol<G}, we have
\begin{equation*}
\begin{split}
c_{_{K,L}}\,\vol&\bigl(K+L+(-1,1)^n\bigr) \vol\left(\left[K+\left(-\frac{1}{2},\frac{1}{2}\right)^n\right]
\cap\left[-L+\left(-\frac{1}{2},\frac{1}{2}\right)^n\right]\right)\\
&\leq \binom{2n}{n} c_{_{K,L}}\,\vol\left(K+\left(-\frac{1}{2},\frac{1}{2}\right)^n\right)
\vol\left(L+\left(-\frac{1}{2},\frac{1}{2}\right)^n\right)\\
&\leq \binom{2n}{n} c_{_{K,L}}\,\Gsub{n}\bigl(K+(-1,1)^n\bigr)\Gsub{n}\bigl(L+(-1,1)^n\bigr).
\end{split}
\end{equation*}
This concludes the proof.
\end{proof}

\begin{remark}\label{r:new proof Theor projsect}
We can also use this approach based on the relations between the volume and the lattice point enumerator to provide an alternative proof to
Theorem~\ref{t:RS_sect_proj}.
Indeed, from \eqref{e:relation_G<vol}, \eqref{e:A,B,C_inters_sum}, \eqref{e:RS_sect_proj_vol} (for which the assumption on the convex set $K$ to be closed is not necessary) and \eqref{e:relation_vol<G}, we obtain
\begin{equation*}
\begin{split}
\Gsub{n-k}(P_{H^\perp}& K)\Gsub{k}(K\cap H)\\
&\leq \vol_{n-k}\left(\bigl(P_{H^\perp} K\bigr)+\frac{1}{2} \Cube{H^\perp}\right)
\vol_{k}\left(\bigl(K\cap H\bigr)+\frac{1}{2} \Cube{H}\right)\\
&\leq \vol_{n-k}\left(P_{H^\perp} \left[K+\left(-\frac{1}{2},\frac{1}{2}\right)^n\right]\right)
\vol_{k}\left(\left[K+\left(-\frac{1}{2},\frac{1}{2}\right)^n\right]\cap H\right)\\
&\leq \binom{n}{k}\vol\left(K+\left(-\frac{1}{2},\frac{1}{2}\right)^n\right)
\leq \binom{n}{k}\Gsub{n}\bigl(K+(-1,1)^n\bigr).
\end{split}
\end{equation*}
We point out that, now, there is no need to assume that $0\in K$.
However, notice that such a method does not allow us to show the statement of the stronger inequality collected in Theorem \ref{t:RS_sect_proj_refined}.
\end{remark}

\smallskip

Given a convex bounded set containing the origin $K$, if we
apply \eqref{e:RS_diff_K_L_refined} to the sets $K$ and
$L':=-K+(-1,1)^n$ we have
\begin{equation}\label{e:RS_diff_K-K_refined_sums}
\sum_{i=0}^{n}\frac{n}{2n-i}\binom{n}{i}\binom{2n}{i}^{-1}
\Gsub{n}(K-K)^{i/n}
\leq\frac{\Gsub{n}\bigl(K+(-1,1)^n\bigr)\Gsub{n}\bigl(K+(-2,2)^n\bigr)}{\Gsub{n}(K)}.
\end{equation}
At this point, it is natural to compare the discrete analogues of \eqref{e:RS_K-K_vol} that involve
extensions of $K$ by Minkowski adding certain cubes, i.e., the above inequality, \eqref{e:RS_K-K} and \eqref{e:RS_K-K_2}.

First, to compare this inequality with \eqref{e:RS_K-K}, we need to relate
\begin{equation*}
\frac{\Gsub{n}\bigl(K+(-1,1)^n\bigr)\Gsub{n}\bigl(K+(-2,2)^n\bigr)}{\Gsub{n}(K)}
-\sum_{i=0}^{n-1}\frac{n}{2n-i}\binom{n}{i}\binom{2n}{i}^{-1}
\Gsub{n}(K-K)^{i/n}
\end{equation*}
and
\begin{equation*}
\Gsub{n}\left(K+\left(-\frac{3}{4},\frac{3}{4}\right)^n\right).
\end{equation*}

Although, unfortunately, we do not have a full answer to this question, next we show that in dimension
$n=2$ the latter expression provides us with a smaller
upper bound for
\[
\Gsub{n}(K-K)\binom{2n}{n}^{-1}
\]
and hence, in the plane, \eqref{e:RS_K-K} is tighter than \eqref{e:RS_diff_K-K_refined_sums}.
This is an immediate consequence of the following result.

\begin{proposition}
Let $K\in\K^2$ be a planar convex body containing the origin. Then
\begin{equation}\label{e:comparing_disc_RS}
\Gsub{2}\bigl(K+(-1,1)^2\bigr)
<\Gsub{2}\bigl(K+(-2,2)^2\bigr)
-\sum_{i=0}^{1}\frac{2}{4-i}\binom{2}{i}\binom{4}{i}^{-1}
\Gsub{2}(K-K)^{i/2}.
\end{equation}
\end{proposition}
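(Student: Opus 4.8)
The plan is to reduce \eqref{e:comparing_disc_RS} to a transparent inequality between the ``shell gain'' $\Gsub{2}(K+(-2,2)^2)-\Gsub{2}(K+(-1,1)^2)$ and the linear size $\Gsub{2}(K-K)^{1/2}$, and then to bound both sides in terms of the coordinate widths of $K$. First I would evaluate the right-hand sum of \eqref{e:comparing_disc_RS} explicitly: for $n=2$ the terms $i=0$ and $i=1$ contribute $\tfrac12$ and $\tfrac13\Gsub{2}(K-K)^{1/2}$ respectively, so \eqref{e:comparing_disc_RS} is equivalent to
\[
\Gsub{2}\bigl(K+(-2,2)^2\bigr)-\Gsub{2}\bigl(K+(-1,1)^2\bigr)>\frac12+\frac13\,\Gsub{2}(K-K)^{1/2}.
\]
Let $w_1,w_2$ denote the lengths of the orthogonal projections of $K$ onto the two coordinate axes.

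For the right-hand side I would use that $K-K$ is centrally symmetric and contained in the box $[-w_1,w_1]\times[-w_2,w_2]$, so counting lattice points in this box gives $\Gsub{2}(K-K)\le(2w_1+1)(2w_2+1)$, and by the arithmetic--geometric mean inequality
\[
\Gsub{2}(K-K)^{1/2}\le\sqrt{(2w_1+1)(2w_2+1)}\le w_1+w_2+1 .
\]
Hence $\tfrac13\Gsub{2}(K-K)^{1/2}\le\tfrac13(w_1+w_2+1)$.

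For the left-hand side I would establish the lower bound $\Gsub{2}(K+(-2,2)^2)-\Gsub{2}(K+(-1,1)^2)\ge w_1+w_2+2$ by a column-by-column count. Using $(-2,2)^2=(-1,1)^2+(-1,1)^2$, for each integer $j$ in the first-coordinate projection of $K+(-1,1)^2$ the vertical slice of $K+(-1,1)^2$ is a nonempty open interval $(c_j,d_j)$, while the corresponding slice of $K+(-2,2)^2$ contains $(c_j-1,d_j+1)$ (add a translate $(0,v)$ with $v\in(-1,1)$). The unique integer in $(c_j-1,c_j]$ and the unique integer in $[d_j,d_j+1)$ are then two distinct lattice points lying in $K+(-2,2)^2$ but not in $K+(-1,1)^2$. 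Summing over the at least $w_1+1$ such columns gives a gain of at least $2(w_1+1)$; the symmetric row count gives at least $2(w_2+1)$, and since the gain is a single number bounded below by both it is at least $\max\{2(w_1+1),\,2(w_2+1)\}\ge w_1+w_2+2$. Combining the two estimates,
\[
\Gsub{2}\bigl(K+(-2,2)^2\bigr)-\Gsub{2}\bigl(K+(-1,1)^2\bigr)-\frac12-\frac13\Gsub{2}(K-K)^{1/2}\ge\frac23(w_1+w_2)+\frac76>0,
\]
which is exactly the asserted strict inequality.

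The main obstacle is the lower bound on the shell gain. One must justify carefully that each relevant column contributes \emph{two} genuinely new lattice points (checking the boundary cases in which $c_j$ or $d_j$ is itself an integer, and the case in which the inner slice contains no lattice point at all, where the two new points still arise from the widened slice), and one must combine the column and row counts correctly: these cannot be added, because of the fourfold double counting at the corners of the shell, so taking the maximum is the safe route. By contrast, the evaluation of the binomial sum and the bounding-box estimate for $\Gsub{2}(K-K)$ are routine.
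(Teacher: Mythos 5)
Your proof is correct and follows essentially the same route as the paper's: both reduce \eqref{e:comparing_disc_RS} to comparing the shell gain $\Gsub{2}\bigl(K+(-2,2)^2\bigr)-\Gsub{2}\bigl(K+(-1,1)^2\bigr)$ with $\tfrac12+\tfrac13\Gsub{2}(K-K)^{1/2}$, bound $\Gsub{2}(K-K)$ via a coordinate bounding box, and extract two new lattice points per row or column of the enlarged set. The only (immaterial) difference is the choice of size parameter: the paper works with $m=\max_i\Gsub{1}(P_{H_i}K)$ and the single wider direction, while you use the Euclidean widths $w_1+w_2$ and take the maximum of the row and column gains.
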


\begin{proof}
Let $H_i=\{x\in\R^2:\, \esc{x, e_i}=0\}$, $i=1,2$,
and set
\[
m:=\max_{i=1,2}\Gsub{1}(P_{H_i}K),
\]
for which we will assume without loss of generality that $m=\Gsub{1}(P_{H_1}K)$.
Then, $K$ is contained in a rectangle $[-a_1,b_1]\times[-a_2,b_2]$, with $a_i,b_i\geq0$ and $\Gsub{1}\bigl([-a_i,b_i]\bigr)\leq m$, $i=1,2$.

\smallskip

So, we clearly have that $\Gsub{2}(K)\leq m^2$ and $\Gsub{2}(K-K)\leq(2m+1)^2$. Moreover,
since $K+(-1,1)^2$ is open and thus, for any $x\in (P_{H_1}K)\cap\Z^2$,
\[
\bigl(K+(-2,2)^2\bigr)\cap \bigl(x+\lin\{e_1\}\bigr)
\]
contains at least two more integer points than
\[
\bigl(K+(-1,1)^2\bigr)\cap \bigl(x+\lin\{e_1\}\bigr),
\]
we get
\[
\Gsub{2}\bigl(K+(-2,2)^2\bigr)\geq \Gsub{2}\bigl(K+(-1,1)^2\bigr) + 2m.
\]
Altogether, since $m\geq1$ (because $0\in K$), we have
\[
\frac{1}{2}+\frac{1}{3}\Gsub{2}(K-K)^{1/2}\leq\frac{1}{2}+\frac{1}{3}(2m+1)<2m
\leq \Gsub{2}\bigl(K+(-2,2)^2\bigr)-\Gsub{2}\bigl(K+(-1,1)^2\bigr),
\]
which is equivalent to \eqref{e:comparing_disc_RS}.
\end{proof}
Next we relate \eqref{e:RS_K-K} and \eqref{e:RS_K-K_2}.
\begin{remark}
Inequalities \eqref{e:RS_K-K} and \eqref{e:RS_K-K_2} are not comparable. Indeed, taking $K=[-r,r]^n$ with $r>0$, one has
\[
c_{_{K,-K}}=\left(\frac{4r+1}{4r+2}\right)^n<1.
\]
So, on the one hand, if $r\in\N$ we get
\[\Gsub{n}(K)=\Gsub{n}\left(K+\left(-\frac{3}{4},\frac{3}{4}\right)^n\right)=\Gsub{n}\bigl(K+(-1,1)^n\bigr)\]
and thus
\[
c_{_{K,-K}}\frac{\Gsub{n}\bigl(K+(-1,1)^n\bigr)^2}{\Gsub{n}(K)}
<\Gsub{n}\left(K+\left(-\frac{3}{4},\frac{3}{4}\right)^n\right).
\]
On the other hand, if $r\notin\N$ then
\[
c_{_{K,-K}}\frac{\Gsub{n}\bigl(K+(-1,1)^n\bigr)}{\Gsub{n}(K)}
=\frac{(4r+1)^n\bigl(2\floor{r}+3\bigr)^n}{(4r+2)^n\bigl(2\floor{r}+1\bigr)^n}
=\left(\frac{1+\frac{2}{2\floor{r}+1}}{1+\frac{1}{4r+1}}\right)^n>1,
\]
where $\floor{r}$ denotes the floor function of $r$ (i.e., the greatest
integer less than or equal to $r$), and hence
\[
c_{_{K,-K}}\frac{\Gsub{n}\bigl(K+(-1,1)^n\bigr)^2}{\Gsub{n}(K)}
>\Gsub{n}\left(K+\left(-\frac{3}{4},\frac{3}{4}\right)^n\right).
\]
\end{remark}

\subsection{From the discrete analogues to the continuous versions}

It is intuitive that one can approximate the volume of a convex body by successively shrinking the lattice.
This can be easily seen by means of the fact that the volume and the lattice point enumerator are equivalent when
the convex body is ``large enough''.
More precisely, given a convex body $K\in\K^n$ with dimension $\dim K=n$, we have
\begin{equation}\label{e:limitrelation_Gn_vol}
\lim_{r\to\infty}\frac{\Gsub{n}(rK)}{r^n}=\vol(K)
\end{equation}
(see~e.g.~\cite[Lemma~3.22]{TV}).
Moreover, it is easy to check that
\begin{equation}\label{e:limitrelation_Gn_vol_2}
\lim_{r\to\infty}\frac{\Gsub{n}(rK+M)}{r^n}=\vol(K)
\end{equation}
for any bounded convex set $M$ containing the origin. Indeed, given $\varepsilon>0$
it follows that, for any $r>0$ large enough, $M\subset (r\varepsilon K)+z_r$ for some $z_r\in\Z^n$, and
thus
\begin{equation*}
\begin{split}
\vol(K)=\lim_{r\to\infty}\frac{\Gsub{n}(rK)}{r^n}
&\leq\liminf_{r\to\infty}\frac{\Gsub{n}(rK+M)}{r^n}
\leq\limsup_{r\to\infty}\frac{\Gsub{n}(rK+M)}{r^n}\\
&\leq \lim_{r\to\infty}\frac{\Gsub{n}\bigl(r(K+\varepsilon K)\bigr)}{r^n}
= \vol(K+\varepsilon K)=(1+\varepsilon)^n\vol(K).
\end{split}
\end{equation*}
Since $\varepsilon>0$ was arbitrary, \eqref{e:limitrelation_Gn_vol_2} holds.

\smallskip

We then conclude this section by proving that the discrete versions of the projection-section and the Rogers-Shephard inequalities we have previously shown imply their corresponding continuous analogues, by exploiting the above relations between the lattice point enumerator and the volume.
To this aim, regarding the discrete projection-section type inequalities, we will show that \eqref{e:RS_sect_proj}
already implies \eqref{e:RS_sect_proj_vol} (and hence, the same is obtained from the stronger inequality \eqref{e:RS_sect_proj_refined}). In the same way, we will prove that \eqref{e:RS_K_L_weak} is enough to derive \eqref{e:RS_K_L_vol} (and thus, the same happens for the more powerful inequalities \eqref{e:RS_K_L} and \eqref{e:RS_diff_K_L_refined}). Moreover, in particular, \eqref{e:RS_K-K_weak} implies \eqref{e:RS_K-K_vol} (and so, the same is true for the stronger versions \eqref{e:RS_K-K}, \eqref{e:RS_K-K_2} and \eqref{e:RS_diff_K-K_refined}).
\begin{theorem}\label{t:RS_disc_imply_cont}
Let $K,L\in\K^n$ be convex bodies containing the origin with $\dim K=\dim L=n$. Then
\begin{enumerate}
\item The discrete inequality \eqref{e:RS_sect_proj} for the lattice point enumerator implies the classical projection-section inequality \eqref{e:RS_sect_proj_vol} for the volume.
\smallskip
\item The discrete inequality \eqref{e:RS_K_L_weak} for the lattice point enumerator
implies the classical Rogers-Shephard inequality \eqref{e:RS_K_L_vol} for the volume.
\end{enumerate}
\end{theorem}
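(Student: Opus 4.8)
The plan is to derive each continuous inequality from its discrete counterpart by applying the latter to the dilates $rK$ (and $rL$) for $r>0$, dividing both sides by the appropriate power of $r$, and letting $r\to\infty$, exploiting the limit relations \eqref{e:limitrelation_Gn_vol} and \eqref{e:limitrelation_Gn_vol_2}. Throughout I would use that dilation commutes with the orthogonal projection onto $H^\perp$ and with the intersection with the linear subspace $H$; namely $P_{H^\perp}(rK)=rP_{H^\perp}K$ and $(rK)\cap H=r(K\cap H)$ (because $rH=H$), and similarly $rK+rL=r(K+L)$ and $(rK)\cap(-rL)=r\bigl(K\cap(-L)\bigr)$.

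For part (i), I would apply \eqref{e:RS_sect_proj} to $rK$, which again contains the origin and has dimension $n$, to obtain
\[
\Gsub{n-k}(rP_{H^\perp}K)\,\Gsub{k}\bigl(r(K\cap H)\bigr)\leq\binom{n}{k}\Gsub{n}\bigl(rK+(-1,1)^n\bigr),
\]
and then divide by $r^n=r^{n-k}r^k$. Since $\dim K=n$ forces $\dim P_{H^\perp}K=n-k$, relation \eqref{e:limitrelation_Gn_vol} applied inside $H^\perp$ gives $\Gsub{n-k}(rP_{H^\perp}K)/r^{n-k}\to\vol_{n-k}(P_{H^\perp}K)$, while \eqref{e:limitrelation_Gn_vol_2} with $M=(-1,1)^n$ yields $\Gsub{n}\bigl(rK+(-1,1)^n\bigr)/r^n\to\vol(K)$. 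For the section factor, if $\dim(K\cap H)=k$ then \eqref{e:limitrelation_Gn_vol} inside $H$ gives $\Gsub{k}\bigl(r(K\cap H)\bigr)/r^k\to\vol_k(K\cap H)$, so passing to the limit recovers \eqref{e:RS_sect_proj_vol}.

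Part (ii) is entirely analogous. Applying \eqref{e:RS_K_L_weak} to $rK$ and $rL$ gives
\[
\Gsub{n}\bigl(r(K+L)\bigr)\,\Gsub{n}\bigl(r(K\cap(-L))\bigr)\leq\binom{2n}{n}\Gsub{n}\bigl(rK+(-1,1)^n\bigr)\,\Gsub{n}\bigl(rL+(-2,2)^n\bigr),
\]
and I would divide by $r^{2n}$. The two right-hand factors tend to $\vol(K)$ and $\vol(L)$ by \eqref{e:limitrelation_Gn_vol_2} with $M=(-1,1)^n$ and $M=(-2,2)^n$, respectively, while the first left-hand factor tends to $\vol(K+L)$ by \eqref{e:limitrelation_Gn_vol}, since $K\subset K+L$ forces $\dim(K+L)=n$. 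Letting $r\to\infty$ then produces \eqref{e:RS_K_L_vol}.

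The only genuine subtlety, and the point I would treat with care, is the behaviour of the lower-dimensional factors: relations \eqref{e:limitrelation_Gn_vol} and \eqref{e:limitrelation_Gn_vol_2} are stated for full-dimensional sets, so when $K\cap H$ (respectively $K\cap(-L)$) has dimension strictly smaller than $k$ (respectively $n$), one must check separately that the normalized count tends to zero, matching the vanishing volume. This follows from the elementary bound $\Gsub{k}(rS)=O\bigl(r^{\dim S}\bigr)$ for a bounded set $S$, so that dividing by $r^k$ with $\dim S<k$ annihilates the term; equivalently, one may simply observe that in this degenerate case the continuous inequality is trivially satisfied because its left-hand side vanishes, and restrict the limiting argument to the full-dimensional case. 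In all cases the identity $\lim_{r\to\infty}\Gsub{k}\bigl(r(K\cap H)\bigr)/r^k=\vol_k(K\cap H)$ (and its analogue for $K\cap(-L)$) holds, which is exactly what the passage to the limit requires.
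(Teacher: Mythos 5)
Your proposal is correct and follows essentially the same route as the paper: apply the discrete inequalities to the dilates $rK$ (and $rL$), normalize by $r^{n}$ (respectively $r^{2n}$), and pass to the limit via \eqref{e:limitrelation_Gn_vol} and \eqref{e:limitrelation_Gn_vol_2}. Your additional remark on the possibly degenerate factors $K\cap H$ and $K\cap(-L)$ is a sound (and slightly more careful) treatment of a point the paper leaves implicit.
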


\begin{proof}
Applying
\eqref{e:RS_sect_proj} with $rK$ (for $r>0$), taking limits as $r\to\infty$
and using \eqref{e:limitrelation_Gn_vol} and \eqref{e:limitrelation_Gn_vol_2}, we get
\[
\begin{split}
\vol_{n-k}(P_{H^\perp} K)\vol_k(K\cap H)
&=\lim_{r\to\infty}
\frac{\Gsub{n-k}(rP_{H^\perp} K)}{r^{n-k}}\cdot\frac{\Gsub{k}\bigl(r(K\cap H)\bigr)}{r^k}\\
&=\lim_{r\to\infty}\frac{\Gsub{n-k}\bigl(P_{H^\perp} (rK)\bigr)\Gsub{k}\bigl((rK)\cap H\bigr)}{r^n}\\
&\leq\lim_{r\to\infty} \binom{n}{k}\frac{\Gsub{n}\bigl(rK+(-1,1)^n\bigr)}{r^n}
=\binom{n}{k}\vol(K).
\end{split}
\]
Analogously, but now applying \eqref{e:RS_K_L_weak} with $rK$ and $rL$ (for $r>0$), we obtain
\[
\begin{split}
\vol(K+L)\vol\bigl(K\cap(-L)\bigr)
&=\lim_{r\to\infty}
\frac{\Gsub{n}\bigl(r(K+L)\bigr)\Gsub{n}\Bigl(r\bigl(K\cap(-L)\bigr)\Bigr)}{r^{2n}}\\
&=\lim_{r\to\infty}\frac{\Gsub{n}(rK+rL)\Gsub{n}\bigl((rK)\cap(-rL)\bigr)}{r^{2n}}\\
&\leq \lim_{r\to\infty}\binom{2n}{n}\frac{\Gsub{n}\bigl(rK+(-1,1)^n\bigr)
\Gsub{n}\bigl(rL+(-2,2)^n\bigr)}{r^{2n}}\\
&=\binom{2n}{n}\vol(K)\vol(L).
\end{split}
\]
This concludes the proof.
\end{proof}

\begin{remark}\label{r:sharp}
Since \eqref{e:RS_sect_proj_vol} and \eqref{e:RS_K_L_vol} are sharp, from the proof above, we have that their discrete analogues \eqref{e:RS_sect_proj} and \eqref{e:RS_K_L_weak} (and hence their corresponding stronger related versions) are asymptotically sharp.
\end{remark}

\section{A discrete analogue of Berwald's inequality}\label{s:Berwald}

Let $K\subset\R^n$ be a convex bounded set containing the origin, let $f:K\longrightarrow\R_{\geq0}$ be a non-negative function and set $p>0$. We will write $\mu$ to denote the \emph{counting measure} on $\Z^n$, considered as a measure on $\R^n$, namely, the measure on $\R^n$ given by $\mu(M)=\Gsub{n}(M)$ for any $M\subset\R^n$. First we observe that we have
\begin{equation}\label{e: sum f(x)_as integral}
\sum_{x\in K\cap\Z^n} f(x)^p=\int_0^\infty pt^{p-1}\Gsub{n}\Bigl(\bigl\{x\in K: \, f(x)>t\bigr\}\Bigr)\,\dlat t.
\end{equation}
Indeed, by Fubini's theorem, we obtain
\begin{equation*}
\begin{split}
\sum_{x\in K\cap\Z^n} f(x)^p
=\int_{\R^n} f(x)^p \chi_{_K}(x)\,\dlat \mu(x)
&=\int_{\R^n} \left(\int_0^{f(x)} pt^{p-1}\,\dlat t\right) \chi_{_K}(x)\,\dlat \mu(x)\\
&=\int_0^\infty \int_{\R^n} pt^{p-1} \chi_{_K}(x) \chi_{_{(0,f(x))}}(t) \,\dlat \mu(x) \,\dlat t\\
&=\int_0^\infty  pt^{p-1} \int_{\R^n} \chi_{_{\{x\in K: \, f(x)>t\}}}(x) \,\dlat \mu(x) \,\dlat t\\
&=\int_0^\infty pt^{p-1}\Gsub{n}\Bigl(\bigl\{x\in K: \, f(x)>t\bigr\}\Bigr)\,\dlat t,
\end{split}
\end{equation*}
which shows \eqref{e: sum f(x)_as integral}.

\smallskip

To prove Theorem~\ref{t:Berwald}, we need the following auxiliary results.

\begin{lemma}\label{l:sum_h_m geq m}
Let $K\subset\R^n$ be a convex bounded set containing the origin and let $m>0$.
Let $h_m:K+(-1,1)^n\longrightarrow\R_{\geq0}$
be the concave function whose hypograph is the closure of
$\conv\bigl((K\times\{0\})\cup(0_{n},m)\bigr) +\bigl((-1,1)^n\times\{0\}\bigr)$.
Then, for every $p>0$,
\begin{equation}\label{e:sum_h_m geq m}
\left(\frac{\binom{n+p}{n}}{\Gsub{n}(K)}\sum_{x\in(K+(-1,1)^n)\cap\Z^n} h_m(x)^p\right)^{1/p} \geq m.
\end{equation}
\end{lemma}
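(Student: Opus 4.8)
The plan is to read off the superlevel sets of $h_m$ directly from the hypograph description in the statement, bound their lattice point counts below via the discrete Brunn--Minkowski inequality \eqref{e: BM_lattice_point_no_G(K)G(L)>0}, and then integrate using the layer-cake identity \eqref{e: sum f(x)_as integral}.

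First I would analyze the geometry. Writing $C=\conv\bigl((K\times\{0\})\cup\{(0_n,m)\}\bigr)\subset\R^{n+1}$, the hypothesis says $\hyp(h_m)=\cl\bigl(C+((-1,1)^n\times\{0\})\bigr)$, so that $\{x\in K+(-1,1)^n:\,h_m(x)\ge t\}$ is exactly the horizontal slice of this body at height $t$. Every point of $C$ has the form $\bigl((1-\lambda)y,\lambda m\bigr)$ with $y\in K$ and $\lambda\in[0,1]$, whence the slice of $C$ at height $t=\lambda m$ equals $(1-t/m)K$ for $t\in[0,m]$. Since the Minkowski summand $(-1,1)^n\times\{0\}$ lives at height $0$, the slice of $C+((-1,1)^n\times\{0\})$ at height $t$ is $(1-t/m)K+(-1,1)^n$, and passing to the closure only enlarges slices; hence
\[
\bigl\{x\in K+(-1,1)^n:\,h_m(x)\ge t\bigr\}\supseteq(1-t/m)K+(-1,1)^n\qquad(0\le t\le m).
\]

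Next I would estimate the lattice point count of this slice. Applying \eqref{e: BM_lattice_point_no_G(K)G(L)>0} to the sets $K$ and $L=\{0_n\}$ with parameter $\lambda=t/m$ (so that $(1-\lambda)K+\lambda\{0_n\}+(-1,1)^n=(1-t/m)K+(-1,1)^n$ and $\Gsub{n}(\{0_n\})=1$) gives
\[
\Gsub{n}\bigl((1-t/m)K+(-1,1)^n\bigr)^{1/n}\ge(1-t/m)\Gsub{n}(K)^{1/n}+\frac{t}{m}\ge(1-t/m)\Gsub{n}(K)^{1/n},
\]
so that $\Gsub{n}\bigl(\{h_m\ge t\}\bigr)\ge(1-t/m)^n\Gsub{n}(K)$ for $t\in[0,m]$, where $\Gsub{n}(K)\ge1$ since $0\in K$. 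Then, applying \eqref{e: sum f(x)_as integral} to $h_m$ on the convex bounded set $K+(-1,1)^n$, noting that $h_m\le m$ (the Minkowski sum and closure do not raise heights above the apex) and that $\Gsub{n}(\{h_m>t\})$ and $\Gsub{n}(\{h_m\ge t\})$ coincide outside the countably many jumps of a non-increasing function, I obtain
\[
\sum_{x\in(K+(-1,1)^n)\cap\Z^n}h_m(x)^p=\int_0^\infty pt^{p-1}\Gsub{n}\bigl(\{h_m\ge t\}\bigr)\,\dlat t\ge\Gsub{n}(K)\int_0^m pt^{p-1}(1-t/m)^n\,\dlat t.
\]
The substitution $t=m\theta$ turns the last integral into $p\,m^{p}\int_0^1\theta^{p-1}(1-\theta)^n\,\dlat\theta=m^p\Gamma(p+1)\Gamma(n+1)/\Gamma(n+p+1)=m^p\binom{n+p}{n}^{-1}$, which, after dividing by $\Gsub{n}(K)$ and taking $p$-th roots, is precisely \eqref{e:sum_h_m geq m}.

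The main obstacle I anticipate is the careful handling of the superlevel-set containment: correctly identifying the horizontal slices of $\conv\bigl((K\times\{0\})\cup\{(0_n,m)\}\bigr)$ as dilates of $K$, tracking the effect of the horizontal Minkowski summand and the closure, and justifying that the discrepancy between the strict superlevel sets $\{h_m>t\}$ appearing in \eqref{e: sum f(x)_as integral} and the closed sets $\{h_m\ge t\}$ used in the estimate is immaterial for the integral (it affects only a measure-zero set of levels $t$). The remaining Beta-type integral computation and its identification with $\binom{n+p}{n}^{-1}$ is routine.
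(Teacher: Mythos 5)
Your proposal is correct and follows essentially the same route as the paper's proof: identify the superlevel sets of $h_m$ as $\left(1-\frac{t}{m}\right)K+(-1,1)^n$, bound their lattice point count from below via the discrete Brunn--Minkowski inequality \eqref{e: BM_lattice_point_no_G(K)G(L)>0}, and conclude with the layer-cake identity \eqref{e: sum f(x)_as integral} and the Beta integral. Your extra care with the closure and with strict versus non-strict superlevel sets is sound but does not change the argument in substance.
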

\begin{proof}
Observe that, for any $0\leq t< m$,
\[
\begin{split}
\Bigl(\conv\bigl((K\times\{0\})\cup(0_{n},m)\bigr) + \bigl((-1,1)^n \times \{0\}\bigr)\Bigr)
\cap\bigl(\R^n\times\{t\}\bigr)\\
=\left(\left(1-\frac{t}{m}\right)K+(-1,1)^n\right)\times\{t\}
\end{split}
\]
and thus
\[
\bigl\{x\in K+(-1,1)^n: \, h_m(x)> t\bigr\}=\left(1-\frac{t}{m}\right)K+(-1,1)^n.
\]
Then, using \eqref{e: BM_lattice_point_no_G(K)G(L)>0} we get
\begin{equation*}
\begin{split}
\int_0^\infty & p t^{p-1}\Gsub{n}\Bigl(\bigl\{x\in K+(-1,1)^n: \, h_m(x)> t\bigr\}\Bigr)\,\dlat t\\
&= \int_0^m p t^{p-1}\Gsub{n}\left(\left(1-\frac{t}{m}\right)K+(-1,1)^n\right)\,\dlat t\\
&\geq \int_0^m p t^{p-1}\left(1-\frac{t}{m}\right)^n\Gsub{n}(K)\,\dlat t
= p \,m^p\, \Gsub{n}(K)\int_0^1 s^{p-1}(1-s)^n\,\dlat s\\
&= p \,m^p\, \Gsub{n}(K) \frac{\Gamma(p)\Gamma(n+1)}{\Gamma(n+p+1)}= m^p \, \Gsub{n}(K)\binom{n+p}{n}^{-1}.
\end{split}
\end{equation*}
This, together with \eqref{e: sum f(x)_as integral} applied to the function $h_m$, yields
\[
\sum_{x\in(K+(-1,1)^n)\cap\Z^n} h_m(x)^p \geq m^p \, \Gsub{n}(K)\binom{n+p}{n}^{-1},
\]
which shows \eqref{e:sum_h_m geq m}.
\end{proof}

\smallskip

Now, given a concave function $f:K\longrightarrow\R_{\geq0}$ defined on a convex bounded set $K\subset\R^n$, we will relate the number of integer points of the superlevel sets of both the function $f$ and its extension $f^{\symbol}$ (whose hypograph is the closure of the Minkowski addition of the hypograph of $f$ and $(-1,1)^n\times\{0\}$) in terms of a suitable $(1/n)$-concave function (on its support).

\begin{lemma}\label{l:ineqs_g(t)_t_0}
Let $K\subset\R^n$ be a convex bounded set containing the origin
and let $f:K\longrightarrow\R_{\geq0}$ be a concave function with $f(0)=|f|_\infty>0$.
For any $p>0$, let
\begin{equation*}
m=\left(\frac{\binom{n+p}{n}}{\Gsub{n}(K)}\sum_{x\in(K+(-1,1)^n)\cap\Z^n}
\bigl(f^{\symbol}\bigr)^{p}(x)\right)^{1/p}
\end{equation*}
and let $g:\R_{\geq0}\longrightarrow\R_{\geq0}$ be the function given by
\[
g(t)=\left\{
\begin{array}{ll}
\left(1-\frac{t}{m}\right)^n \Gsub{n}(K) & \text{ if } t\leq m,\\[2mm]
0 & \text{ otherwise}.
\end{array}
\right.
\]
Then, there exists $t_0\in\R_{\geq0}$ such that
\begin{equation}\label{e:ineq_g(t)_1}
\Gsub{n}\Bigl(\bigl\{x\in K+(-1,1)^n: \, f^{\symbol}(x)> t \bigr\}\Bigr) > g(t)
\end{equation}
for all $0\leq t<t_0$ and
\begin{equation}\label{e:ineq_g(t)_2}
g(t)\geq \Gsub{n}\Bigl(\bigl\{x\in K: \, f(x) > t\bigr\}\Bigr)
\end{equation}
for all $t\geq t_0$.
\end{lemma}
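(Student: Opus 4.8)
The plan is to recast everything in terms of distribution functions and to play off the concavity of $f$ against the discrete Brunn--Minkowski inequality \eqref{e: BM_lattice_point_no_G(K)G(L)>0}. Writing $K_t=\{x\in K:\,f(x)>t\}$ for the (convex) superlevel sets, the hypograph description of $f^{\symbol}$ gives $\{z\in K+(-1,1)^n:\,f^{\symbol}(z)>t\}=K_t+(-1,1)^n$ for every $t\ge0$, so the two quantities to be compared with $g$ are
\[
G(t):=\Gsub{n}\bigl(K_t+(-1,1)^n\bigr)\qquad\text{and}\qquad H(t):=\Gsub{n}(K_t).
\]
Both are non-increasing and vanish for $t\ge f(0)=|f|_\infty$, while $g$ is non-increasing, vanishes for $t\ge m$, and satisfies $g(0)=\Gsub{n}(K)$. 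Since $0\in(-1,1)^n$ we always have $G(t)\ge H(t)$; the aim is thus to sandwich the explicit barrier $g$ between these two, and I will show \eqref{e:ineq_g(t)_1} is a statement about $G$ and \eqref{e:ineq_g(t)_2} a statement about $H$.

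The structural input I would extract first is the inclusion $(1-\lambda)K_{t_1}+\lambda K_{t_2}\subseteq K_{(1-\lambda)t_1+\lambda t_2}$, which is immediate from the concavity of $f$. Distributing the cube via $(1-\lambda)(-1,1)^n+\lambda(-1,1)^n=(-1,1)^n$ and applying \eqref{e: BM_lattice_point_no_G(K)G(L)>0} to $K_{t_1}$ and $K_{t_2}$ yields, for $t=(1-\lambda)t_1+\lambda t_2$,
\[
G(t)^{1/n}\ge(1-\lambda)H(t_1)^{1/n}+\lambda H(t_2)^{1/n}.
\]
This \emph{mixed} concavity---the cube-thickened function on the left, the plain one on the right---is precisely what the asymmetric shape of the discrete Brunn--Minkowski inequality permits, and it is the device that transfers information from the superlevel sets of $f$ to those of $f^{\symbol}$.

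I would then set $t_0:=\sup\{t\ge0:\,H(t)>g(t)\}$ (and $t_0:=0$ if this set is empty). Inequality \eqref{e:ineq_g(t)_2} for $t\ge t_0$ follows at once from the definition of the supremum together with the monotonicity and one-sided continuity of $H$ and $g$. For \eqref{e:ineq_g(t)_1} I fix $0\le s<t_0$, choose $t_2\in(s,t_0]$ with $H(t_2)>g(t_2)$, write $s=(1-\lambda)\cdot0+\lambda t_2$ with $\lambda=s/t_2$, and use the displayed mixed concavity with $t_1=0$:
\[
G(s)^{1/n}\ge(1-\lambda)H(0)^{1/n}+\lambda H(t_2)^{1/n}>(1-\lambda)H(0)^{1/n}+\lambda\,g(t_2)^{1/n}.
\]
Since $g^{1/n}$ is affine on $[0,m]$ one has $g(s)^{1/n}=(1-\lambda)g(0)^{1/n}+\lambda\,g(t_2)^{1/n}$, so the whole estimate collapses to the single endpoint comparison $H(0)^{1/n}\ge g(0)^{1/n}$, i.e. $\Gsub{n}(\{f>0\})\ge\Gsub{n}(K)$.

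The hard part will be exactly this behaviour at the origin, where the cube asymmetry bites: because $G^{1/n}$ cannot be expected to be concave, the argument must be anchored at $t=0$, and there $H(0)=\Gsub{n}(\{f>0\})$ may a priori be strictly smaller than $\Gsub{n}(K)$ (a boundary lattice point of $K$ may carry $f=0$). The way I would resolve it is to use that, $f$ being concave, nonnegative and attaining its maximum at the interior lattice point $0$, its zero set lies in $\partial K$; hence every lattice point of $K$ sits within the open cube $(-1,1)^n$ of some interior point of $\{f>0\}$, which upgrades the endpoint statement to the cube-thickened version $G(0)=\Gsub{n}\bigl(\{f>0\}+(-1,1)^n\bigr)\ge\Gsub{n}(K)=g(0)$. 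Carrying this thickening through the propagation step---again via \eqref{e: BM_lattice_point_no_G(K)G(L)>0}, and checking separately the regime $t_0\ge f(0)$ where $g$ and $H$ both vanish past the common threshold---is the delicate point on which \eqref{e:ineq_g(t)_1} on all of $[0,t_0)$ ultimately rests.
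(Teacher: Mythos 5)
Your overall architecture is genuinely close to the paper's: writing $K_t=\{x\in K:\,f(x)>t\}$, $H(t)=\Gsub{n}(K_t)$ and $G(t)=\Gsub{n}\bigl(K_t+(-1,1)^n\bigr)$, both proofs rest on the same mixed inequality $G\bigl((1-\lambda)t_1+\lambda t_2\bigr)^{1/n}\ge(1-\lambda)H(t_1)^{1/n}+\lambda H(t_2)^{1/n}$ obtained from concavity of $f$ and \eqref{e: BM_lattice_point_no_G(K)G(L)>0}, together with a crossing time $t_0$. The structural difference is that you define $t_0$ by the crossing of $H$ with $g$, making \eqref{e:ineq_g(t)_2} automatic and forcing you to propagate \eqref{e:ineq_g(t)_1} down from the endpoint $t_1=0$, whereas the paper defines $t_0$ by the crossing of $G$ with $g$, making \eqref{e:ineq_g(t)_1} automatic and propagating \eqref{e:ineq_g(t)_2} upward from $t_0$. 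However, the step you yourself single out as the hard part is a genuine gap, and your proposed repair fails. The anchor you need is $H(0)=\Gsub{n}(\{f>0\})\ge\Gsub{n}(K)=g(0)$, which can indeed fail (lattice points of $\partial K$ where $f$ vanishes). Replacing $\{f>0\}$ by $\{f>0\}+(-1,1)^n$ does not carry through the propagation: after forming $(1-\lambda)\bigl(\{f>0\}+(-1,1)^n\bigr)+\lambda K_{t_2}$ you are left with $(2-\lambda)$ copies of the cube, while the target $K_s+(-1,1)^n$ absorbs only one, and the asymmetric shape of \eqref{e: BM_lattice_point_no_G(K)G(L)>0} does not tolerate the excess. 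The correct fix -- and it is exactly what the paper does in its own propagation step -- is simpler: take the closed level set $\{x\in K:\,f(x)\ge0\}=K$ itself as the $t_1=0$ set. Concavity still gives $(1-\lambda)K+\lambda K_{t_2}\subset K_{\lambda t_2}$ (one lands \emph{strictly} above level $\lambda t_2$ because $f>t_2$ on $K_{t_2}$ and $\lambda>0$), and then \eqref{e: BM_lattice_point_no_G(K)G(L)>0} produces the coefficient $(1-\lambda)\Gsub{n}(K)^{1/n}=(1-\lambda)g(0)^{1/n}$ directly, with no need for $H(0)$ or any thickening.

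There is a second, unacknowledged gap: the identity $g(s)^{1/n}=(1-\lambda)g(0)^{1/n}+\lambda\,g(t_2)^{1/n}$ uses affineness of $g^{1/n}$ on $[0,m]$ and hence presupposes $t_2\le m$. Since all you know a priori is $t_2\le t_0\le|f|_\infty$, this silently requires $m\ge|f|_\infty$, which is the entire first half of the paper's proof (a contradiction argument built on the cone function $h_m$ of Lemma \ref{l:sum_h_m geq m}) and which you never address. This particular gap is benign once noticed: you only need ``$\le$'' there, and $t\mapsto(1-t/m)_{+}\,\Gsub{n}(K)^{1/n}$ is convex, so $g(\lambda t_2)^{1/n}\le(1-\lambda)g(0)^{1/n}+\lambda\,g(t_2)^{1/n}$ holds with no constraint on $t_2$. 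It is worth noting that with both repairs in place your route would actually \emph{shorten} the paper's argument, bypassing Lemma \ref{l:sum_h_m geq m} and the inequality $m\ge|f|_\infty$ altogether; but as submitted the proof does not close, since the endpoint anchoring of \eqref{e:ineq_g(t)_1} is explicitly left open and the affineness step is unjustified. (A minor caveat, shared with the paper's own proof: at $t=0$ the propagation only yields $G(0)\ge g(0)$, so the strict inequality in \eqref{e:ineq_g(t)_1} at that single point is not really obtained; this is harmless for the integral identities in the proof of Theorem \ref{t:Berwald}.)
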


\begin{proof}
First we will show that $m\geq |f|_{\infty}$. To this aim, assume by contradiction that $m<|f|_{\infty}$ and
set $h_m:K+(-1,1)^n\longrightarrow\R_{\geq0}$
the concave function whose hypograph is the closure of
\[\conv\bigl((K\times\{0\})\cup(0_{n},m)\bigr) +\bigl((-1,1)^n\times\{0\}\bigr).\]
Then, by the concavity of $f$,
\[
\hyp(f)\supset\conv\bigl((K\times\{0\})\cup(0_{n},m)\bigr)
\]
and so
$\hyp\bigl(f^{\symbol}\bigr)\supset\hyp(h_m)$. This also implies that
\begin{equation*}
\bigl\{x\in K+(-1,1)^n: \, f^{\symbol}(x) > t\bigr\}
\supset\bigl\{x\in K+(-1,1)^n: \, h_m(x) > t\bigr\}
\end{equation*}
for all $0\leq t<|f|_\infty$.
Therefore, assuming that $m<|f|_{\infty}$, the latter inclusion jointly with
\eqref{e: sum f(x)_as integral} applied to the functions
$f^{\symbol}$ and $h_m$, Lemma \ref{l:sum_h_m geq m} and the fact that
\[
\Gsub{n}\Bigl(\bigl\{x\in K+(-1,1)^n: \, \, f^{\symbol}(x) > t\bigr\}\Bigr)\geq 1
\]
for all $0\leq t<|f|_\infty$ (since $f(0)=|f|_\infty$), imply that

\begin{equation*}
\begin{split}
m &= \left(\frac{\binom{n+p}{n}}{\Gsub{n}(K)}
\int_0^{|f|_\infty} p t^{p-1}\Gsub{n}\Bigl(\bigl\{x\in K+(-1,1)^n: \, f^{\symbol}(x) > t\bigr\}\Bigr)
\, \dlat t\right)^{1/p}\\
&>\left(\frac{\binom{n+p}{n}}{\Gsub{n}(K)}
\int_0^{m} p t^{p-1}
\Gsub{n}\Bigl(\bigl\{x\in K+(-1,1)^n: \, f^{\symbol}(x) > t\bigr\}\Bigr)\,\dlat t\right)^{1/p}\\
&\geq \left(\frac{\binom{n+p}{n}}{\Gsub{n}(K)}
\int_0^{m} p t^{p-1}
\Gsub{n}\Bigl(\bigl\{x\in K+(-1,1)^n: \, h_{m}(x) > t\bigr\}\Bigr)\,\dlat t\right)^{1/p}\\
&= \left(\frac{\binom{n+p}{n}}{\Gsub{n}(K)}\sum_{x\in(K+(-1,1)^n)\cap\Z^n} h_m(x)^p\right)^{1/p}
\geq m,
\end{split}
\end{equation*}
a contradiction.

Now, since $f^{\symbol} \leq |f|_\infty$, we trivially have
\[0=\Gsub{n}\Bigl(\bigl\{x\in K+(-1,1)^n: \, f^{\symbol}(x) > |f|_\infty\bigr\}\Bigr)
\leq g\bigl(|f|_\infty\bigr)
\]
and thus we may consider
\[
t_0:=\inf\left\{t>0: \, \Gsub{n}\Bigl(\bigl\{x\in K+(-1,1)^n: \, f^{\symbol}(x) > t\bigr\}\Bigr)
\leq g(t)\right\}<\infty.
\]
Then, on the one hand,
we obtain by the definition of $t_0$ that \eqref{e:ineq_g(t)_1} holds for all $0\leq t<t_0\leq|f|_\infty$.
Moreover, since
\[
\bigl\{x\in K+(-1,1)^n: \, f^{\symbol}(x) > t \bigr\}=
\bigl\{x\in K: \, f(x) > t \bigr\}+(-1,1)^n
\]
(which is an open convex bounded set) for all $0\leq t<|f|_\infty$,
we have that the function
$t\mapsto \Gsub{n}\Bigl(\bigl\{x\in K+(-1,1)^n: \, f^{\symbol}(x) > t\bigr\}\Bigr)$
is continuous from the right on $\R_{\geq0}$. Therefore, we obtain that
\[
\Gsub{n}\Bigl(\bigl\{x\in K+(-1,1)^n: \, f^{\symbol}(x) > t_0 \bigr\}\Bigr)
\leq g(t_0).
\]
On the other hand, given $t\in\bigl[t_0,|f|_\infty\bigr]$
and taking $\lambda\in(0,1]$ such that $t_0=\lambda t$, from \eqref{e: BM_lattice_point_no_G(K)G(L)>0} we obtain
\begin{equation}\label{e:proving_lemma_t0_BM}
\begin{split}
\Gsub{n}\Bigl(\bigl\{x&\in K+(-1,1)^n: \, f^{\symbol}(x) > t_0\bigr\}\Bigr)^{1/n}\\
&= \Gsub{n}\Bigl(\bigl\{x\in K: \, f(x) > t_0 \bigr\}+(-1,1)^n\Bigr)^{1/n}\\
&\geq \Gsub{n}\Bigl( (1-\lambda)\bigl\{x\in K: \, f(x) \geq 0\bigr\}+
\lambda\bigl\{x\in K: \, f(x) > t\bigr\} + (-1,1)^n \Bigr)^{1/n}\\
&\geq (1-\lambda)\Gsub{n}(K)^{1/n}+ \lambda \Gsub{n}\Bigl(\bigl\{ x\in K: \, f(x) > t \bigr\}\Bigr)^{1/n}
\end{split}
\end{equation}
and also (taking into account that
$t_0\leq|f|_\infty\leq m$)
\begin{equation}\label{e:proving_lemma_t0_g(t0)}
g(t_0)^{1/n}=
\left(1-\frac{t_0}{m}\right) \Gsub{n}(K)^{1/n}
=(1-\lambda) \Gsub{n}(K)^{1/n} + \lambda\left(1-\frac{t}{m}\right) \Gsub{n}(K)^{1/n}.
\end{equation}
Thus, using \eqref{e:proving_lemma_t0_BM} and \eqref{e:proving_lemma_t0_g(t0)},
we get that \eqref{e:ineq_g(t)_2} holds for all $t_0\leq t\leq|f|_\infty$.
This concludes the proof, since \eqref{e:ineq_g(t)_2} is further trivially true for any
$t\in\bigl[|f|_\infty,\infty\bigr)$.
\end{proof}

\begin{proof}[Proof of Theorem~\ref{t:Berwald}]
We may assume, without loss of generality, that $|f|_\infty>0$, and let $m$ and $g$ be defined as in Lemma \ref{l:ineqs_g(t)_t_0}. Observe also that, for any $r>0$,
\begin{equation}\label{e:m_integral}
\left(\frac{\binom{n+r}{n}}{\Gsub{n}(K)}\int_0^{m} r t^{r-1}\left(1-\frac{t}{m}\right)^n
\Gsub{n}(K)\,\dlat t\right)^{1/r}= m.
\end{equation}
From \eqref{e: sum f(x)_as integral}
applied to $f^{\symbol}$ jointly with the definition of $g$ and $m$,
the latter implies, in particular, that
\begin{equation}\label{e:int_0^infty t^{p-1} g(t)}
\int_{0}^{\infty}t^{p-1}\Gsub{n}\Bigl(\bigl\{x\in K+(-1,1)^n\,:\,f^{\symbol}(x)> t\bigr\}\Bigr)\,\dlat t = \int_{0}^{\infty}t^{p-1} g(t)\,\dlat t.
\end{equation}

Hence, with $t_0$ as provided by Lemma \ref{l:ineqs_g(t)_t_0} we obtain,
from \eqref{e:ineq_g(t)_1} and \eqref{e:ineq_g(t)_2}, that
\begin{equation*}
\begin{split}
\int_0^{t_0}& t^{q-1}\Bigl[\Gsub{n}\Bigl(\bigl\{x\in K+(-1,1)^n: \, f^{\symbol}(x) > t \bigr\}\Bigr)-g(t)\Bigr]\,\dlat t\cr
&\qquad - \int_{t_0}^{\infty}t^{q-1}\left[g(t)-\Gsub{n}\Bigl(\bigl\{x\in K: \, f(x) > t \bigr\}\Bigr)\right]\,\dlat t\\
&= \int_0^{t_0}t^{p-1}t^{q-p}
\left[\Gsub{n}\Bigl(\bigl\{x\in K+(-1,1)^n: \, f^{\symbol}(x) > t \bigr\}\Bigr)-g(t)\right]\,\dlat t\\
&\qquad -\int_{t_0}^{\infty}t^{p-1}t^{q-p}
\left[g(t)-\Gsub{n}\Bigl(\bigl\{x\in K: \, f(x) > t\bigr\}\Bigr)\right]\,\dlat t\\
&\leq t_0^{q-p}\int_0^{t_0}t^{p-1}
\left[\Gsub{n}\Bigl(\bigl\{x\in K+(-1,1)^n: \, f^{\symbol}(x) > t\bigr\}\Bigr)-g(t)\right]\,\dlat t\\
&\qquad - t_0^{q-p}\int_{t_0}^{\infty}t^{p-1}
\left[g(t)-\Gsub{n}\Bigl(\bigl\{x\in K: \, f(x) > t\bigr\}\Bigr)\right]\,\dlat t.
\end{split}
\end{equation*}
Moreover, we have
\begin{equation*}
\begin{split}
t_0^{q-p}\int_0^{t_0}& t^{p-1}\Bigl[\Gsub{n}\Bigl(\bigl\{x\in K+(-1,1)^n: \, f^{\symbol}(x)> t\bigr\}\Bigr)-g(t)\Bigr]\,\dlat t\\
&\qquad + t_0^{q-p}\int_{t_0}^{\infty}t^{p-1}
\Bigl[\Gsub{n}\Bigl(\bigl\{x\in K: \, f(x)> t \bigr\}\Bigr)-g(t)\Bigr]\,\dlat t\\
&\leq t_0^{q-p}\int_{0}^{\infty}t^{p-1}\Bigl[\Gsub{n}\Bigl(\bigl\{x\in K+(-1,1)^n\,:\,f^{\symbol}(x)> t\bigr\}\Bigr)-g(t)\Bigr]\,\dlat t = 0,
\end{split}
\end{equation*}
where the latter equality follows from \eqref{e:int_0^infty t^{p-1} g(t)}.

Altogether, we have shown that
\begin{equation*}
\begin{split}
 \int_0^{t_0}& t^{q-1}
\Gsub{n}\Bigl(\bigl\{x\in K+(-1,1)^n: \, f^{\symbol}(x) > t\bigr\}\Bigr)\,\dlat t\\
&\qquad + \int_{t_0}^\infty t^{q-1}\Gsub{n}\Bigl(\bigl\{x\in K: \, f(x) > t\bigr\}\Bigr)\,\dlat t
\leq \int_0^\infty t^{q-1}g(t)\,\dlat t
\end{split}
\end{equation*}
and hence
\[
\int_{0}^\infty  t^{q-1}\Gsub{n}\Bigl(\bigl\{x\in K: \, f(x)> t\bigr\}\Bigr)\,\dlat t
\leq\int_0^\infty  t^{q-1}g(t)\,\dlat t.
\]
Consequently, from \eqref{e:m_integral} for $r=q$, we have
\begin{equation*}
\begin{split}
& \left(\frac{\binom{n+q}{n}}{\Gsub{n}(K)}\int_0^\infty q t^{q-1}
\Gsub{n}\Bigl(\bigl\{x\in K: \, f(x)> t \bigr\}\Bigr)\,\dlat t\right)^{1/q}\\
& \qquad \leq \left(\frac{\binom{n+q}{n}}{\Gsub{n}(K)}\int_0^{m} q t^{q-1}\left(1-\frac{t}{m}\right)^n
\Gsub{n}(K)\,\dlat t\right)^{1/q}= m
\end{split}
\end{equation*}
and thus, from \eqref{e: sum f(x)_as integral}
applied to $f$ and $q$,
\[
\left(\frac{\binom{n+q}{n}}{\Gsub{n}(K)}\sum_{x\in K\cap\Z^n}f^{q}(x)\right)^{1/q}
\leq\left(\frac{\binom{n+p}{n}}{\Gsub{n}(K)}\sum_{x\in(K+(-1,1)^n)\cap\Z^n}\bigl(f^{\symbol}\bigr)^{p}(x)\right)^{1/p}.
\]
This concludes the proof.
\end{proof}

\smallskip

As briefly pointed out within the introduction, the continuous version of Berwald's inequality (Theorem \ref{t:Berwald_classical}) allows us to derive the Rogers-Shephard inequalities \eqref{e:RS_sect_proj_vol} and \eqref{e:RS_K-K_vol}. To show this, first notice that
Stirling's formula for the gamma function yields the asymptotic formula
\[\lim_{x\to\infty}\frac{\Gamma(x)}{\sqrt{\frac{2\pi}{x}}\left(\frac{x}{e}\right)^x}=1,\]
which implies, in particular, that
$\binom{n+q}{n}^{1/q}\to 1$ as $q\to\infty$. Moreover,
given a convex body $K\in\K^n$ with $\dim K=n$
and a concave function $f:K\longrightarrow\R_{\geq0}$,
it is well-known that
\[\lim_{q\to\infty}\left(\int_{K}f^{q}(x)\,\dlat x\right)^{1/q}=|f|_\infty\] (here we notice that,
since $f$ is concave, $|f|_{\infty}$ agrees with $\esssup_{x\in K}f(x)$). Thus, applying Theorem \ref{t:Berwald_classical}
with $p=k$ (and $n'=n-k$) and the concave function (cf. \eqref{e:BM}) $f:P_{H^\perp}K\longrightarrow\R_{\geq0}$ given by \[f(x)=\vol_k\bigl(K\cap(x+H\bigr)\bigr)^{1/k}\]
for $H\in\L^n_k$, one gets Theorem \ref{t:RS_sect_proj_vol}
by taking limit as $q\to\infty$.
Indeed, one has
\begin{equation*}
\begin{split}
\vol_k(K\cap H)^{1/k}&\leq|f|_\infty=
\lim_{q\to\infty}\left(\frac{\binom{n-k+q}{n-k}}{\vol_{n-k}(P_{H^\perp}K)}
\int_{P_{H^\perp}K}f^{q}(x)\,\dlat x\right)^{1/q}\\
&\leq\left(\frac{\binom{n}{k}}{\vol_{n-k}(P_{H^\perp}K)}\int_{P_{H^\perp}K}f^{k}(x)\,\dlat x\right)^{1/k}\\
&=\left(\frac{\binom{n}{k}}{\vol_{n-k}(P_{H^\perp}K)}\vol(K)\right)^{1/k},
\end{split}
\end{equation*}
where the last equality follows from Fubini's theorem.

Analogously, from Theorem \ref{t:Berwald_classical} for $p=n$ and the concave function (cf. \eqref{e:BM}) $f:K-K\longrightarrow\R_{\geq0}$ given by \[f(x)=\vol\bigl(K\cap(x+K)\bigr)^{1/n},\]
for which one has
\begin{equation*}
\begin{split}
\int_{K-K}f^{n}(x)\,\dlat x&=\int_{\R^n}\int_{\R^n}\chi_{_K}(y)\chi_{_{x+K}}(y)\,\dlat y\,\dlat x
=\int_{\R^n}\int_{\R^n}\chi_{_K}(y)\chi_{_{y-K}}(x)\,\dlat x\,\dlat y\\
&=\vol(K)^2,
\end{split}
\end{equation*}
one gets
\begin{equation*}
\begin{split}
\vol(K)^{1/n}&=|f|_\infty=
\lim_{q\to\infty}\left(\frac{\binom{n+q}{n}}{\vol(K-K)}
\int_{K-K}f^{q}(x)\,\dlat x\right)^{1/q}\\
&\leq\left(\frac{\binom{2n}{n}}{\vol(K-K)}\int_{K-K}f^{n}(x)\,\dlat x\right)^{1/n}=
\left(\frac{\binom{2n}{n}}{\vol(K-K)}\vol(K)^2\right)^{1/n},
\end{split}
\end{equation*}
and so Theorem \ref{t:RS_vol} follows.

Arguing in a similar way in the discrete setting, but now applying Theorem \ref{t:Berwald} (for the above-mentioned functions and values of $p$, and letting $q\to\infty$), we get the following results:

\begin{corollary}
Let $k\in\{1,\dots,n-1\}$ and $H\in\L^n_{k}$.
Let $K\subset\R^n$ be a convex bounded set containing the origin. Then
\[
\Gsub{n-k}(P_{H^\perp}K)\vol_k(K\cap H)
\leq \binom{n}{k}\sum_{x\in (P_{H^\perp}K+\Cube{H^\perp})\cap\Z^n}
\,\sup_{z\in \Cube{H^\perp}}\vol_k\Bigl(K\cap\bigl((x+z)+H\bigr)\Bigr).
\]
\end{corollary}


\begin{corollary}
Let $K\subset\R^n$ be a convex bounded set containing the origin. Then
\[
\Gsub{n}(K-K)\vol(K)
\leq \binom{2n}{n}\sum_{x\in (K-K+(-1,1)^n)\cap\Z^n}
\,\sup_{z\in(-1,1)^n}\vol\Bigl(K\cap\bigl((x+z)+K\bigr)\Bigr).
\]
\end{corollary}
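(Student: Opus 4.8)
The plan is to apply the discrete Berwald inequality (Theorem~\ref{t:Berwald}) to the domain $K-K$ and to the covariogram-type function $f:K-K\longrightarrow\R_{\geq0}$ given by $f(x)=\vol\bigl(K\cap(x+K)\bigr)^{1/n}$, with $p=n$, and then to let $q\to\infty$ --- exactly mirroring the continuous derivation of Theorem~\ref{t:RS_vol} from Theorem~\ref{t:Berwald_classical} carried out just above in the text.

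First I would check the hypotheses of Theorem~\ref{t:Berwald}. The set $K-K$ is convex, bounded and contains the origin (since $0\in K$). The function $f$ is concave on $K-K$: this is the $(1/n)$-concavity of the covariogram, a direct consequence of the Brunn-Minkowski inequality \eqref{e:BM} applied to the sets $K\cap(x+K)$ (cf.~the analogous remark preceding the corollary). Finally, $f(0)=\vol(K)^{1/n}=|f|_\infty$: since the covariogram $x\mapsto\vol\bigl(K\cap(x+K)\bigr)$ is even, concavity of $f$ gives $f(0)=f\bigl(\tfrac12 x+\tfrac12(-x)\bigr)\geq\tfrac12 f(x)+\tfrac12 f(-x)=f(x)$ for every $x$, so the supremum is attained at the origin.

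With the hypotheses in place, Theorem~\ref{t:Berwald} with $p=n$ reads
\begin{equation*}
\left(\frac{\binom{n+q}{n}}{\Gsub{n}(K-K)}\sum_{x\in (K-K)\cap\Z^n}f^{q}(x)\right)^{1/q}
\leq\left(\frac{\binom{2n}{n}}{\Gsub{n}(K-K)}\sum_{x\in ((K-K)+(-1,1)^n)\cap\Z^n}\bigl(f^{\symbol}\bigr)^{n}(x)\right)^{1/n}.
\end{equation*}
On the right-hand side I would identify the summand: since $t\mapsto t^n$ is increasing on $\R_{\geq0}$ and $\overline{f}(y)^{n}=\vol\bigl(K\cap(y+K)\bigr)$ for \emph{every} $y\in\R^n$ (the covariogram already vanishes off $K-K$, so the off-support convention is harmless), one obtains $\bigl(f^{\symbol}\bigr)^{n}(x)=\sup_{z\in(-1,1)^n}\vol\bigl(K\cap((x+z)+K)\bigr)$, which is precisely the quantity appearing in the statement.

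It then remains to take $q\to\infty$. Here I would use that $\binom{n+q}{n}^{1/q}\to 1$ (Stirling), and that over the finite lattice set $(K-K)\cap\Z^n$ one has $\lim_{q\to\infty}\bigl(\sum_x f^{q}(x)\bigr)^{1/q}=\max_{x\in(K-K)\cap\Z^n}f(x)$; since $0\in(K-K)\cap\Z^n$ and $f(0)=|f|_\infty$, this maximum equals $\vol(K)^{1/n}$. Thus the left-hand side tends to $\vol(K)^{1/n}$, and raising the limiting inequality to the $n$-th power and multiplying through by $\Gsub{n}(K-K)$ yields the claim. I do not anticipate a serious obstacle: the only points requiring genuine care are the even-ness/concavity argument giving $f(0)=|f|_\infty$ and the off-support identity $\overline{f}^{\,n}=\vol\bigl(K\cap(\cdot+K)\bigr)$, while the remaining $q\to\infty$ passage is the now-routine limit already exploited for the continuous statements.
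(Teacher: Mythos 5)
Your proposal is correct and is exactly the argument the paper intends: it states this corollary as an immediate consequence of applying Theorem~\ref{t:Berwald} with $p=n$ to the concave function $f(x)=\vol\bigl(K\cap(x+K)\bigr)^{1/n}$ on $K-K$ and letting $q\to\infty$, mirroring the continuous derivation of Theorem~\ref{t:RS_vol}. All the points you flag (concavity via Brunn--Minkowski, $f(0)=|f|_\infty$, which also follows simply from $K\cap(x+K)\subset K$, the identification of $\bigl(f^{\symbol}\bigr)^n$, and the $q\to\infty$ limit over a finite lattice set) check out.
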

We point out that the more general result, involving convex bounded sets $K,L\subset\R^n$ containing the origin such that $\max_{x\in K+L}\vol\bigl(K\cap(x-L)\bigr)=\vol\bigl(K\cap(-L)\bigr)$ may be also derived, obtaining
in this way that
\[
\Gsub{n}(K+L)\vol\bigl(K\cap(-L)\bigr)
\leq \binom{2n}{n}\sum_{x\in (K+L+(-1,1)^n)\cap\Z^n}
\,\sup_{z\in(-1,1)^n}\vol\Bigl(K\cap\bigl((x+z)-L\bigr)\Bigr).
\]
We also observe that one cannot immediately derive, in principle, other discrete versions of the Rogers-Shephard inequalities \eqref{e:RS_sect_proj_vol} and \eqref{e:RS_K-K_vol} from Theorem \ref{t:Berwald}, despite counting with the discrete analogue \eqref{e: BM_lattice_point_no_G(K)G(L)>0} of the classical Brunn-Minkowski inequality, because of the lack of concavity of the functional $\Gsub{n}(\cdot)^{1/n}$. This is the reason for which Theorem \ref{t:Berwald} yields the above discrete counterparts of \eqref{e:RS_sect_proj_vol} and \eqref{e:RS_K-K_vol}, where the volume arises jointly with the lattice point enumerator. Some engaging examples of discrete analogues of classical inequalities where these two functionals appear together can be found in \cite{AHZ}.

\smallskip

We conclude the paper by showing that the discrete version of Berwald's inequality we have previously proven implies its continuous analogue:
\begin{theorem}\label{t:Berw_disc_imply_cont}
Let $K\in\K^n$ and let $f:K\longrightarrow\R_{\geq0}$ be a concave function. Then
the discrete inequality \eqref{e:Berwald} implies the classical Berwald inequality \eqref{e:Berwald_classical}.
\end{theorem}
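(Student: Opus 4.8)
The plan is to apply the discrete Berwald inequality \eqref{e:Berwald} to the dilates $rK$ of $K$ and let $r\to\infty$, exploiting the asymptotic relations \eqref{e:limitrelation_Gn_vol} and \eqref{e:limitrelation_Gn_vol_2} between $\Gsub{n}(\cdot)$ and the volume. First I would arrange that the hypotheses of Theorem~\ref{t:Berwald} hold. Since both sides of \eqref{e:Berwald_classical} depend only on $\int_K f^p$, $\int_K f^q$ and $\vol(K)$, which are invariant under translations of $(K,f)$ and unaffected by modifying $f$ on the measure-zero set $\bd K$, I may replace $f$ by its upper semicontinuous concave regularization (again concave, with the same integrals, and now attaining its supremum $|f|_\infty=\sup_K f$ at some $x^\ast\in K$), and then translate so that $x^\ast=0$. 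Thus I assume $0\in K$, $\dim K=n$ and $f(0)=|f|_\infty$. For $r>0$ put $f_r\colon rK\longrightarrow\R_{\geq0}$, $f_r(x)=f(x/r)$; this is concave, $0\in rK$ and $f_r(0)=|f_r|_\infty$, so Theorem~\ref{t:Berwald} applies to $(rK,f_r)$ for every $0<p<q$.

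Next I would rewrite both sides of \eqref{e:Berwald} (for $rK$ and $f_r$) through the layer-cake identity \eqref{e: sum f(x)_as integral}. Writing $S_t=\{x\in K:\,f(x)>t\}$, a convex set by concavity of $f$, one identifies the superlevel sets directly: $f_r(x)>t$ iff $f(x/r)>t$, so $\{x\in rK:\,f_r(x)>t\}=rS_t$; and from the definition of the $\symbol$-extension, $f_r^{\symbol}(z)>t$ holds exactly when $z\in\{f_r>t\}+(-1,1)^n$, whence $\{z:\,f_r^{\symbol}(z)>t\}=rS_t+(-1,1)^n$ for all $t\geq0$. Consequently \eqref{e: sum f(x)_as integral} yields
\[
\frac{1}{r^n}\sum_{x\in rK\cap\Z^n}f_r^{q}(x)=\int_0^\infty q\,t^{q-1}\,\frac{\Gsub{n}(rS_t)}{r^n}\,\dlat t
\]
and
\[
\frac{1}{r^n}\sum_{x\in(rK+(-1,1)^n)\cap\Z^n}\bigl(f_r^{\symbol}\bigr)^{p}(x)=\int_0^\infty p\,t^{p-1}\,\frac{\Gsub{n}\bigl(rS_t+(-1,1)^n\bigr)}{r^n}\,\dlat t.
\]

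Then I would pass to the limit $r\to\infty$ inside these integrals by dominated convergence. Pointwise, \eqref{e:limitrelation_Gn_vol} gives $r^{-n}\Gsub{n}(rS_t)\to\vol(S_t)$ and \eqref{e:limitrelation_Gn_vol_2} gives $r^{-n}\Gsub{n}(rS_t+(-1,1)^n)\to\vol(S_t)$ for every $t$ with $\dim S_t=n$ (applied to $\cl S_t$, the lattice points on $\bd(rS_t)$ being negligible after dividing by $r^n$); since $t\mapsto\vol(S_t)$ is non-increasing, $S_t$ is full-dimensional below a threshold and of volume zero above it, so convergence holds for a.e.\ $t$, and all integrands vanish for $t\geq|f|_\infty$. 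For the majorant, \eqref{e:relation_G<vol} together with $S_t\subset K$ gives, for $r\geq1$,
\[
\frac{\Gsub{n}(rS_t)}{r^n}\leq\vol\Bigl(S_t+\tfrac{1}{2r}(-1,1)^n\Bigr)\leq\vol\bigl(K+(-\tfrac12,\tfrac12)^n\bigr)
\]
and likewise $r^{-n}\Gsub{n}(rS_t+(-1,1)^n)\leq\vol\bigl(K+(-\tfrac32,\tfrac32)^n\bigr)$, so both integrands are bounded on $[0,|f|_\infty]$ by an integrable function. Hence, by the continuous layer-cake identity $\int_K f^s=\int_0^\infty s\,t^{s-1}\vol(S_t)\,\dlat t$, the two displayed quantities converge to $\int_K f^q$ and $\int_K f^p$, while $r^{-n}\Gsub{n}(rK)\to\vol(K)$ by \eqref{e:limitrelation_Gn_vol}. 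Dividing \eqref{e:Berwald} (for $rK,f_r$) and letting $r\to\infty$ then yields exactly \eqref{e:Berwald_classical}.

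I expect the main obstacle to be precisely this interchange of limit and integration: one must produce a uniform integrable majorant, which \eqref{e:relation_G<vol} supplies, and control the negligible set of levels $t$ at which $S_t$ is not full-dimensional (so that the cited asymptotic relations still deliver the correct pointwise limit). The preliminary regularization guaranteeing $f(0)=|f|_\infty$ is a minor but genuinely necessary step, since it is what makes Theorem~\ref{t:Berwald} applicable to each $f_r$.
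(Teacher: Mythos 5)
Your proposal is correct, and it shares the paper's skeleton --- apply \eqref{e:Berwald} to $x\mapsto f(x/r)$ on $rK$ and let $r\to\infty$ via \eqref{e:limitrelation_Gn_vol} --- but the passage to the limit on the right-hand side is executed quite differently. The paper bounds $\sum_{y}(h^{\symbol})^p(y)$ for large $r$ by a Riemann sum of the $\varepsilon$-enlarged function $f^{{\symbol}_\varepsilon}$ over $(1/r)\Z^n$, invokes the Riemann integrability of concave functions to pass to $\int_{K+\varepsilon(-1,1)^n}(f^{{\symbol}_\varepsilon})^p$, and then needs a second limit (in $\varepsilon$, via the layer-cake formula, the nested intersection \eqref{e:inter_suplevelsets+cubes} and monotone convergence) to recover $\int_K f^p$. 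You instead push everything through the layer-cake identity \eqref{e: sum f(x)_as integral} from the start, using the exact identification $\{f_r^{\symbol}>t\}=rS_t+(-1,1)^n$ (which the paper also establishes, inside Lemma \ref{l:ineqs_g(t)_t_0}), and perform a single dominated-convergence passage in $r$, with \eqref{e:relation_G<vol} supplying the majorant. Your route is more streamlined and avoids the auxiliary function $f^{{\symbol}_\varepsilon}$ and the two-parameter limit; its price is that you must justify $r^{-n}\Gsub{n}(rS_t)\to\vol(S_t)$ and $r^{-n}\Gsub{n}(rS_t+(-1,1)^n)\to\vol(S_t)$ for superlevel sets that are neither closed nor necessarily full-dimensional, which you correctly reduce to the standard $O(r^{n-1})$ count of lattice points near $\bd(r\cl S_t)$ --- a routine but genuinely needed estimate that the paper's Riemann-sum route sidesteps. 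One further point in your favor: you explicitly regularize and translate so that $f(0)=|f|_\infty$ with $0\in K$ before invoking Theorem \ref{t:Berwald}, a hypothesis the paper's proof uses tacitly (its preliminary remark only addresses upper semicontinuity).
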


Before proving this result we observe the following.
Given $K\in\K^n$ and a concave function $f:K\longrightarrow\R_{\geq0}$,
we have
\begin{equation}\label{e:f_Riem_sums}
\lim_{r\to\infty}\left[\frac{1}{r^n}\sum_{x\in(rK)\cap\Z^n}f\left(\frac{x}{r}\right)\right]
=\lim_{r\to\infty}\left[\frac{1}{r^n}\sum_{y\in K\cap((1/r)\Z^n)}f(y)\right]=\int_{K}f(x)\,\dlat x,
\end{equation}
since $f$ is Riemann integrable (because it is concave on the convex set $K$, whose boundary has
null measure).

Moreover,
we may assume without loss of generality that $f$ is upper
semicontinuous. Indeed, otherwise we would work with its upper closure,
which is determined via the closure of the superlevel sets of $f$ (see
\cite[page~14 and Theorem~1.6]{RoWe}) and thus has the same integral on
$\R^n$ because of Fubini's theorem together with the facts that all the
superlevel sets of $f$ are convex (since $f$ is concave) and the
boundary of a convex set has null (Lebesgue) measure.

Notice then that, for any decreasing sequence $(r_k)_{k\in\N}$ with $r_k\to 0$ as $k\to\infty$, we have
\begin{equation}\label{e:inter_suplevelsets+cubes}
\bigcap_{k=1}^{\infty}\Bigl(\bigl\{x\in K: \, f(x)\geq t\bigr\} + r_k(-1,1)^n\Bigr)
=\bigl\{x\in K: \, f(x)\geq t\bigr\}
\end{equation}
due to the fact that $\{x\in K: \, f(x)\geq t\bigr\}$ is closed for all $t\geq0$.

\begin{proof}[Proof of Theorem \ref{t:Berw_disc_imply_cont}]
On the one hand, from \eqref{e:Berwald} applied to the function $h:rK\longrightarrow\R_{\geq0}$ given by $h(x):=f(x/r)$, we get
\begin{equation*}\label{proving_Berw_disc_to_cont_1}
\left(\frac{\binom{n+q}{n}}{\Gsub{n}(rK)}\sum_{y\in(rK)\cap\Z^n}h^{q}(y)\right)^{1/q}
\leq\left(\frac{\binom{n+p}{n}}{\Gsub{n}(rK)}\sum_{y\in(rK+(-1,1)^n)\cap\Z^n}
\bigl(h^{\symbol}\bigr)^{p}(y)\right)^{1/p}.
\end{equation*}
On the other hand, given $\varepsilon>0$, for sufficiently large $r>0$ we have that
\begin{equation*}
\begin{split}
\sum_{y\in(rK+(-1,1)^n)\cap\Z^n}&\bigl(h^{\symbol}\bigr)^{p}(y)\\
=\; &\sum_{y/r\in(K+(1/r)(-1,1)^n)\cap((1/r)\Z^n)}\left(\sup_{u\in(-1,1)^n}f\left(\frac{y+u}{r}\right)\right)^{p}\\
=\; &\sum_{x\in(K+(1/r)(-1,1)^n)\cap((1/r)\Z^n)}\left(\sup_{v\in(1/r)(-1,1)^n}f(x+v)\right)^{p}\\
\leq\; &\sum_{x\in(K+\varepsilon(-1,1)^n)\cap((1/r)\Z^n)}\left(\,\sup_{v\in\varepsilon(-1,1)^n}f(x+v)\right)^{p}\\
\leq\; &\sum_{x\in(K+\varepsilon(-1,1)^n)\cap((1/r)\Z^n)}\bigl(f^{{\symbol}_\varepsilon}\bigr)^{p}(x),
\end{split}
\end{equation*}
where $f^{{\symbol}_\varepsilon}:K+\varepsilon(-1,1)^n\longrightarrow\R_{\geq0}$ is the function given by	
\[f^{{\symbol}_\varepsilon}(z) = \sup_{u\in\varepsilon(-1,1)^n}f(z+u)\]
for all $z\in\R^n$.
Thus, for $r$ large enough, we get
\begin{equation*}
\left(\frac{\binom{n+q}{n}}{\Gsub{n}(rK)}\sum_{y\in(rK)\cap\Z^n}h^{q}(y)\right)^{1/q}
\leq\left(\frac{\binom{n+p}{n}}{\Gsub{n}(rK)}\sum_{x\in(K+\varepsilon(-1,1)^n)\cap((1/r)\Z^n)}
\bigl(f^{{\symbol}_\varepsilon}\bigr)^{p}(x)\right)^{1/p}
\end{equation*}
which implies, using \eqref{e:f_Riem_sums} and \eqref{e:limitrelation_Gn_vol}, that
\begin{equation*}
\left(\frac{\binom{n+q}{n}}{\vol(K)}\int_{K}f^{q}(x)\,\dlat x\right)^{1/q}
\leq\left(\frac{\binom{n+p}{n}}{\vol(K)}
\int_{K+\varepsilon(-1,1)^n}\bigl(f^{{\symbol}_\varepsilon}\bigr)^{p}(x)\,\dlat x\right)^{1/p}.
\end{equation*}
Since $\varepsilon>0$ was arbitrary, to conclude the proof it is enough to show that
\begin{equation}\label{e:final_goal_disc_to_contBerw}
\inf_{\varepsilon>0}\int_{K+\varepsilon(-1,1)^n}\bigl(f^{{\symbol}_\varepsilon}\bigr)^{p}(x)\,\dlat x
\leq \int_{K}f^{p}(x)\,\dlat x.
\end{equation}
To this aim first observe that, by Fubini's theorem, we have (cf.~\eqref{e: sum f(x)_as integral})
\begin{equation}\label{e:Cavalieri_f^symbol}
\int_{K+\varepsilon(-1,1)^n}\bigl(f^{{\symbol}_\varepsilon}\bigr)^{p}(x)\,\dlat x
=\int_{0}^{\infty} p t^{p-1}
\vol\Bigr(\bigl\{x\in K+\varepsilon(-1,1)^n: \, f^{{\symbol}_\varepsilon}(x) > t\bigr\}\Bigr)\,\dlat t.
\end{equation}
Now, since
\[
\bigl\{x\in K+\varepsilon(-1,1)^n: \, f^{{\symbol}_\varepsilon}(x) > t\bigr\}
=\bigl\{x\in K: \, f(x) > t\bigr\} + \varepsilon(-1,1)^n,
\]
we have
\[
\vol\Bigl(\bigl\{x\in K+\varepsilon(-1,1)^n: \, f^{{\symbol}_\varepsilon}(x) > t\bigr\}\Bigr)
\leq \vol\Bigl(\bigl\{x\in K: \, f(x) \geq t\bigr\} + \varepsilon(-1,1)^n\Bigr)
\]
and hence, from \eqref{e:inter_suplevelsets+cubes},
\begin{equation}\label{e:lim_levelsets_f^{symbol}_varepsilon}
\lim_{\varepsilon\to0^+}\vol\Bigl(\bigl\{x\in K+\varepsilon(-1,1)^n: \,
f^{{\symbol}_\varepsilon}(x) > t\bigr\}\Bigr)\\
\leq\vol\Bigl(\bigl\{x\in K: \, f(x)\geq t\bigr\}\Bigr).
\end{equation}
Therefore, taking limits as $\varepsilon\to0^+$ in both sides of \eqref{e:Cavalieri_f^symbol}, applying
the monotone convergence theorem and using \eqref{e:lim_levelsets_f^{symbol}_varepsilon}, we get
\begin{equation*}
\begin{split}
\inf_{\varepsilon>0}\int_{K+\varepsilon(-1,1)^n}\bigl(f^{{\symbol}_\varepsilon}\bigr)^{p}(x)\,\dlat x &=\lim_{\varepsilon\to0^+}\int_{K+\varepsilon(-1,1)^n}\bigl(f^{{\symbol}_\varepsilon}\bigr)^{p}(x)\,\dlat x\\
&\leq \int_{0}^{\infty} p t^{p-1}
\vol\Bigr(\bigl\{x\in K: \, f(x) \geq t\bigr\}\Bigr)\,\dlat t\\
&=\int_{K}f^{p}(x)\,\dlat x.
\end{split}
\end{equation*}
So \eqref{e:final_goal_disc_to_contBerw} follows, which concludes the proof.
\end{proof}


\noindent {\it Acknowledgements.}
We are very grateful to the anonymous
referee for her/his very helpful comments and remarks which have allowed us
to improve the presentation of this work.
We would like to thank M. A. Hern\'andez Cifre for her very
valuable suggestions during the preparation of this paper.

\vspace{-0.16cm}

\end{document}